\numberwithin{equation}{section}
 \newtheorem{lemma}{Lemma}[section]
 \newtheorem{proposition}[lemma]{Proposition}
 \newtheorem{theorem}{Theorem}[section]
 \theoremstyle{remark}
 \newtheorem{remark}{Remark}[section]
\numberwithin{equation}{section}
\begin{document}

%--------------------------------------------------------------------------------------
%--------------------------------------------------------------------------------------
\title{\bf Asymptotics of Radially Symmetric Solutions for the Exterior Problem of  Multidimensional Burgers Equation}
\author{{\bf Tong Yang}\\[1mm]
Department of Mathematics, City University of Hong Kong, China\\
Email address: matyang@cityu.edu.hk\\[2mm]
{\bf Huijiang Zhao}\\[1mm]
School of Mathematics and Statistics, Wuhan University, China\\
Computational Science Hubei Key Laboratory, Wuhan University, China\\
Email address: hhjjzhao@whu.edu.cn\\[2mm]
{\bf Qingsong Zhao}\\[1mm]
School of Mathematics and Statistics, Wuhan University, China\\
Email address: zhaoqingsong8899@live.com
}

\date{}

\maketitle

\begin{center}
{\bf Dedicated to Professor Shuxing Chen on the occasion of his 80th birthday}
\end{center}

\begin{abstract}
We are concerned with the large-time behavior of the radially symmetric solution for multidimensional Burgers equation on the exterior of a ball $\mathbb{B}_{r_0}(0)\subset \mathbb{R}^n$ for $n\geq 3$ and some positive constant $r_0>0$, where the boundary data $v_-$ and the far field state $v_+$ of the initial data are prescribed and correspond to a stationary wave. It is shown in \cite{Hashimoto-Matsumura-JDE-2019} that a sufficient condition to guarantee the existence of such a stationary wave is $v_+<0, v_-\leq |v_+|+\mu(n-1)/r_0$. Since the stationary wave is no longer monotonic, its nonlinear stability is justified only recently in \cite{Hashimoto-Matsumura-JDE-2019} for the case when $v_\pm<0, v_-\leq v_++\mu(n-1)/r_0$. The main purpose of this paper is to verify the time asymptotically nonlinear stability of such a stationary wave for the whole range of $v_\pm$ satisfying $v_+<0, v_-\leq |v_+|+\mu(n-1)/r_0$. Furthermore, we also derive the temporal convergence rate, both algebraically and exponentially. Our stability analysis is based on a space weighted energy method with a suitable chosen weight function, while for the temporal decay rates, in addition to such a space weighted energy method, we also use the space-time weighted energy method employed in \cite{Kawashima-Matsumura-CMP-1985} and \cite{Yin-Zhao-KRM-2009}.

\end{abstract}

\maketitle

%\tableofcontents

%---------------------------------------------------------------------------------------
\section{Introduction}

This paper is concerned with the precise description of the large time behaviors of solutions of the following initial-boundary value problem of multidimensional Burgers equation in an exterior domain $\Omega:=\mathbb{R}^n\backslash \overline{\mathbb{B}}_{r_0}(0)\subset\mathbb{R}^n$ for $n\geq 3$:
\begin{eqnarray}\label{1.1}
   {\bf u}_t+({\bf u}\cdot\nabla){\bf u}&=&\mu\Delta {\bf u}, \quad t>0,\ x\in \Omega,\nonumber\\
   {\bf u}(0,x)&=&{\bf u}_0(x),\quad x\in\Omega,\\
   {\bf u}(t,x)&=&{\bf b}(t,x),\quad t>0,\ x\in\partial\mathbb{B}_{r_0}(0),\nonumber
\end{eqnarray}
and, as in \cite{Hashimoto-NonliAnal-2014, Hashimoto-OsakaJMath-2016, Hashimoto-Matsumura-JDE-2019}, our main purpose is to understand how the space dimension $n$ effect the large time behaviors of solutions of the initial-boundary value problem \eqref{1.1}. Here ${\bf u}=\left(u_1(t,x),\cdots,u_n(t,x)\right)$ is a vector-valued unknown function of $x=(x_1,\cdots,x_n)\in \mathbb{R}^n$ and $t\geq 0,$ ${\bf u}\cdot\nabla=\sum\limits_{j=1}^nu_j\frac{\partial}{\partial x_j}$, $\mu$ and $r_0>0$ are some given positive constants. ${\bf u}_0(x)$ and ${\bf b}(t,x)$ are given initial and boundary values respectively satisfying the compatibility condition ${\bf b}(0,x)={\bf u}_0(x)$ for all $x\in\partial\mathbb{B}_{r_0}(0)$.

In this paper, we will focus on the radially symmetric solutions for the initial-boundary value problem \eqref{1.1}. In fact, if ${\bf b}(t,x)=\frac{x}{|x|}v_-, {\bf u}_0(x)=\frac{x}{|x|}v_0(|x|)$ satisfying $\lim\limits_{|x|\to+\infty}v_0(|x|)=v_+$ and $v_0(r_0)=v_-$ for some given constants $v_\pm\in\mathbb{R}$ with $v_0(|x|)$ being some given scalar function, then one can seek radially symmetric solutions to the initial-boundary value problem \eqref{1.1}. For such a case, if we introduce a new unknown function $v(t,r)$ by letting ${\bf u}(t,x)=\frac xrv(t,r)$ with $r=|x|$, then the radially symmetric solution $v(t,r):= v(t,|x|)$ of the initial-boundary value problem (\ref{1.1}) satisfies the following initial-boundary value problem
\begin{eqnarray}\label{1.2}
    v_t+\left(\frac{v^2}{2}\right)_r&=&\mu\left(v_{rr}+(n-1)\left(\frac vr\right)_r\right),\quad t>0,\ r>r_0,\nonumber\\
    v(t,r_0)&=&v_-,\quad t>0,\\
    \lim\limits_{r\rightarrow \infty }v(t,r)&=&v_+,\quad t>0,\nonumber\\
    v(0,r)&=&v_0(r),\quad r>r_0,\nonumber
\end{eqnarray}
where the initial data $v_0(r)$ is assumed to satisfy the compatibility condition
\begin{equation}\label{Compatibility condition}
v_0(r_0)=v_-,\quad \lim\limits_{r\to\infty}v_0(r)=v_+.
\end{equation}
Throughout the rest of this paper, we set $V_-=v_--\frac{\mu(n-1)}{r_0}$.

To see the influence of the space dimension $n$ on the asymptotics of the initial-boundary value problem \eqref{1.2}, one needs first to consider the corresponding problem for the one-dimensional Burgers equation in the half line
\begin{eqnarray}\label{1.1.1}
    v_t+\left(\frac{v^2}{2}\right)_r&=&\mu v_{rr},\quad r>0,\ t>0,\nonumber\\
    v(t,0)&=&v_-,\quad t>0,\\
    \lim\limits_{r\rightarrow \infty }v(t,r)&=&v_+,\quad t>0,\nonumber\\
    v(0,r)&=&v_0(r),\quad r>0,\nonumber
\end{eqnarray}
and for the initial-boundary value problem \eqref{1.1.1}, as illustrated in \cite{Matsumura-MAA-2001} and \cite{Nishihara-ADC-2001}, its large time behavior can be completely classified by the unique global entropy solution of the resulting Riemann problem of the inviscid Burgers equation
\begin{eqnarray}\label{Riemann Problem for Burgers}
v_t+\left(\frac{v^2}{2}\right)_r&=&0,\quad t>0,\ r\in\mathbb{R},\nonumber\\
v(0,r)&=&\left\{
\begin{array}{rl}
v_-,\quad &r<0,\\
v_+,\quad &r>0
\end{array}
\right.
\end{eqnarray}
together with the so-called stationary wave $\phi_{v_-,v_+}(r)$ connecting the boundary value $v_-$ and the far-field state $v_+$, which is due to the occurrence of the boundary and satisfies
\begin{eqnarray}\label{SW-1d}
\left(\frac{v^2}{2}\right)_r&=&\mu v_{rr},\quad r>0,\nonumber\\
v(0)&=&v_-,\\
\lim\limits_{r\to+\infty}v(r)&=&v_+.\nonumber
\end{eqnarray}
The rigorous mathematical justification of the above classifications for the initial-boundary value problem \eqref{1.1.1} was carried out in \cite{Liu-Matsumura-Nishihara-SIMA-1998, Liu-Nishihara-JDE-1997, Liu-Yu-ARMA-1997, Nishihara-JMAA-2001} and the results can be summarized in the Table \ref{1.1.2}. Here $\phi_{v_-,v_+}(r)$ is the stationary wave solving the problem (\ref{SW-1d}), $\psi^R_{v_-,v_+}(t,r)$ is the unique rarefaction wave solution of the Riemann problem \eqref{Riemann Problem for Burgers} for the case $v_-<v_+$, and $W_{v_-,v_+}(r-st)$ with $s=\frac{v_-+v_+}{2}$ is the suitably shifted traveling wave solution of the one-dimensional Burgers equation satisfying $W_{v_-,v_+}(\pm\infty)=v_\pm$ for the case $v_->v_+$ and $s\geq 0$.

\begin{table}\label{1.1.2}
  \centering \caption{Results available for \eqref{1.1.1}}
  \begin{tabular}{|c|c|c|}
    \hline
    \multicolumn{2}{|c|}{Boundary Condition} & Asymptotic Behavior \\
    \hline
    %\multirow{3}{*}
    {$v_-<v_+$}&$v_-<v_+\leq0$ & $\phi_{v_-,v_+}$,\ \ \cite{Liu-Matsumura-Nishihara-SIMA-1998}\\
    \cline{2-3}
    &$v_-<0<v_+$ & $\phi_{v_-,0}+\psi^R_{0,v_+}$,\ \ \cite{Liu-Matsumura-Nishihara-SIMA-1998}\\
    \cline{2-3}
    &$0\leq v_-<v_+$&$\psi^R_{v_-,v_+}$,\ \ \cite{Liu-Matsumura-Nishihara-SIMA-1998}\\
    \hline
    %\multirow{2}{*}
    {$v_->v_+$}&$v_-+v_+<0$&$\phi_{v_-,v_+}$,\ \ \cite{Liu-Nishihara-JDE-1997}\\
    \cline{2-3}
    &$v_-+v_+\geq0$&$W_{v_-,v_+}$,\ \ \cite{Liu-Nishihara-JDE-1997, Liu-Yu-ARMA-1997, Nishihara-JMAA-2001}\\
    \hline
  \end{tabular}
\end{table}

But for the solutions of the initial-boundary value problem (\ref{1.2}), the story is quite is different which are due to the following reasons:
\begin{itemize}
\item The first reason is that the corresponding stationary wave $\phi_{v_-,v_+}(r)$ connecting the boundary value $v_-$ and the far-field state $v_+$, which solves the following boundary problem
    \begin{eqnarray}\label{SW-RS-Burgers}
    \left(\frac{v^2}{2}\right)_r&=&\mu\left(v_{rr}+(n-1)\left(\frac vr\right)_r\right),\quad r>r_0,\nonumber\\
    v(r_0)&=&v_-,\\
    \lim\limits_{r\to+\infty}v(r)&=&v_+,\nonumber
    \end{eqnarray}
    is no longer monotonic, hence the techniques employed in \cite{Liu-Matsumura-Nishihara-SIMA-1998} and \cite{Liu-Nishihara-JDE-1997} can not be used here;
\item The second reason is that the first equation of \eqref{1.2} is nonautonomous, thus even the problem on how to construct non-stationary travelling wave $W_{v_-,v_+}(r-st)$ satisfying $W_{v_-,v_+}(\pm\infty)=v_{\pm}$ and $s\not=0$ is unknown, to say nothing about its stability.
\end{itemize}

Even so, for certain cases, it is still hopeful to use the stationary wave $\phi_{v_-,v_+}(r)$ solving \eqref{SW-RS-Burgers}, the unique rarefaction wave solution $\psi^R_{v_-,v_+}(t,r)$ of the Riemann problem \eqref{Riemann Problem for Burgers} and/or their linear superposition to describe the asymptotics of the global solutions of the initial-boundary value problem \eqref{1.2}. Such a problem was studied in \cite{Hashimoto-NonliAnal-2014, Hashimoto-OsakaJMath-2016, Hashimoto-Matsumura-JDE-2019} and the results available up to now can be summarized as follows:
\begin{itemize}
\item For the case $n=3,$ a complete classification of its asymptotic behaviors, which are similar to that of the one-dimensional Burgers equation in the half line, i.e. the initial-boundary value problem \eqref{1.1.1}, together with its rigorous mathematical justification, which includes even a linear superposition of stationary and viscous shock waves, are given in \cite{Hashimoto-Matsumura-JDE-2019};

\item For $n>3,$ only the following two cases are studied in \cite{Hashimoto-NonliAnal-2014, Hashimoto-OsakaJMath-2016, Hashimoto-Matsumura-JDE-2019}:
\begin{itemize}
\item [(a).] For the case of $V_-\leq 0\leq v_+,$ if one assume further that
\begin{equation}\label{v-+-condiiton}
0<v_-<\frac{2\mu}{r_0\left(1+\sqrt{(n-3)/(n-1)}\right)},
\end{equation}
it is shown in \cite{Hashimoto-NonliAnal-2014, Hashimoto-OsakaJMath-2016, Hashimoto-Matsumura-JDE-2019} that the asymptotics is a linear superposition of a stationary wave $\phi_{v_-,0}$ and a rarefaction wave $\psi^R_{0,v_+}.$ The nonlinear stability of the above wave pattern together with the temporal convergence rate of the global solution $v(t,r)$ of the initial-boundary value problem \eqref{1.2} toward such a wave pattern are studied in \cite{Hashimoto-NonliAnal-2014, Hashimoto-OsakaJMath-2016, Hashimoto-Matsumura-JDE-2019};

\item [(b).] For the case of $V_-\leq v_+<0$, the stationary wave $\phi_{v_-,v_+}$ is shown to be nonlinear stable, while the temporal convergence rate is unknown.
\end{itemize}
\end{itemize}

Thus for $n>3$, to the best of our knowledge, the following four cases remain unsolved:
\begin{itemize}
\item [(a).] $V_-\leq 0\leq v_+$ but $v_-\notin\left(0,\frac{2\mu}{r_0\left(1+\sqrt{(n-3)/(n-1)}\right)}\right)$;
\item [(c).] $0<V_-<v_+$;
\item [(d).] $V_->v_+, V_-+v_+<0;$
\item [(e).] $V_->v_+, V_-+v_+\geq0,$
\end{itemize}
and the main purpose of this present paper is to consider the case
\begin{equation}\label{1.3}
  v_+<0,\ \ V_-\leq |v_+|.
\end{equation}
Such a boundary condition corresponds to the stationary wave.

\begin{table}\label{1.1.3}
  \centering \caption{Results available for \eqref{1.2}}
  \begin{tabular}{|c|c|c|c|}
    \hline
    \multicolumn{2}{|c|}{Boundary Condition} & Asymptotic Behavior&Temporal convergence rate \\
    \hline
    %\multirow{3}{*}
    {$V_-<v_+$}&$V_-\leq v_+<0$& $\phi_{v_-,v_+}$,\ \ \cite{Hashimoto-Matsumura-JDE-2019} \& This Paper & This Paper\\
    \cline{2-4}
    &$V_-<0\leq v_+$ & $\phi_{v_-,0}+\psi^R_{0,v_+}$,\ \  \cite{Hashimoto-NonliAnal-2014, Hashimoto-OsakaJMath-2016, Hashimoto-Matsumura-JDE-2019} for partial results&\cite{Hashimoto-Matsumura-JDE-2019} for partial results  \\
    \cline{2-4}
    &$0< V_-<v_+$&Unsolved&Unsolved\\
    \hline
    %\multirow{2}{*}
    {$V_->v_+$}&$V_-\leq -v_+>0$&$\phi_{v_-,v_+}$,\ \ This Paper& This Paper\\
    \cline{2-4}
    &$V_-+v_+>0$&Unsolved&Unsolved\\
    \hline
  \end{tabular}
\end{table}

Notice that (\ref{1.3}) includes both the case (b) and the case (d). Our main results contain two parts:
\begin{itemize}
\item  For the case (b), temporal convergence rate is obtained;
\item For the case (d), the nonlinear stability of the stationary wave $\phi_{v_-,v_+}$ together with the temporal convergence rate are obtained.
\end{itemize}
The results available up to now for the rigorous mathematical justifications of the asymptotics of the initial-boundary value problem \eqref{1.2} can be summarized in the Table 2.

Before concluding this section, we outline our main ideas used in this paper. As pointed out before, this paper is concentrated on the case when the boundary condition corresponds to the stationary wave. For such a case, it is proved in Proposition 2.3 of \cite{Hashimoto-Matsumura-JDE-2019} that a sufficient condition to guarantee the unique existence of the stationary wave $\phi(r)$ to the boundary value problem \eqref{SW-RS-Burgers} is \eqref{1.3}. As for the nonlinear stability of such a stationary wave $\phi(r)$, since it is no longer monotonic, the main idea used in \cite{Hashimoto-Matsumura-JDE-2019} is to introduce the new unknown function  $z(t,r)=r^{\frac{n-1}{2}}(v(t,r)-\phi(r))$ and from \eqref{1.2} and \eqref{SW-RS-Burgers}, one can deduce that $z(t,r)$ solves (cf. (5.2) in \cite{Hashimoto-Matsumura-JDE-2019})
\begin{eqnarray}\label{Hashimoto-Matsumura-JDE-2019-formula}
z_t+(\phi z)_r+\left(-\frac{n-1}{2r}\phi+\frac{\mu (n^2-1)}{4r^2}\right)z-\mu z_{rr}&=&\frac{n-1}{2r^{\frac{n+1}{2}}}z^2-\frac{1}{2r^{\frac{n-1}{2}}}\left(z^2\right)_r,\quad t>0,\ r>r_0,\nonumber\\
z(t,r_0)&=&0,\quad t>0,\\
z(0,r)&=&z_0(r):=r^{\frac{n-1}{2}}\left(v_0(r)-\phi(r)\right),\quad r>r_0.\nonumber
\end{eqnarray}

For the initial-boundary value problem \eqref{Hashimoto-Matsumura-JDE-2019-formula}, if one performs the energy method as in \cite{Hashimoto-Matsumura-JDE-2019} to yield the zeroth-order energy type estimates, the following term appears in the left hand side of the estimates, cf. the estimate (5.2) in \cite{Hashimoto-Matsumura-JDE-2019}
\begin{equation}\label{Bad term}
I:=\frac 12\int^t_0\int^{+\infty}_{r_0}\left(\phi_r-\frac{n-1}{r}\phi +\frac{\mu(n^2-1)}{2r^2}\right)z^2drd\tau.
\end{equation}
For the one-dimensional case and if $\phi(r)$ is monotonic increasing, then such a term has a nice sign and then the nonlinear stability result follows easily. But for $n\geq 3$, since $\phi(r)$ is no longer monotonic, it is difficult to determine the sign of such a term. The main observation in \cite{Hashimoto-Matsumura-JDE-2019} is that when
\begin{equation}\label{Hashimoto-JDE-Assumptions-on-v+-}
v_\pm<0,\quad V_-\leq v_+,
\end{equation}
one can further deduce, cf. Lemma 2.4 in \cite{Hashimoto-Matsumura-JDE-2019}, that there exists a negative constant $\nu_0\leq 0$ such that
\begin{equation}\label{Upper Negative Bound on SW}
\phi(r)\leq \nu_0\leq 0,\quad r\geq r_0
\end{equation}
and it holds that $\phi(r)-\frac{\mu (n-1)^2}{2r}$ is monotonically non-decreasing for $r>r_0$, that is
\begin{equation}\label{Monotonical Property of SW}
\frac{d\phi(r)}{dr}+\frac{\mu(n-1)^2}{2r^2}\geq 0,\quad r> r_0.
\end{equation}
Having obtained the above estimates, the main idea used in \cite{Hashimoto-Matsumura-JDE-2019} is that one can deduce from the estimates \eqref{Upper Negative Bound on SW} and \eqref{Monotonical Property of SW} that $I\geq 0$ and hence the nonlinear stability result can be obtained via the elementary energy method, cf. the proofs of Theorem 3.2 in \cite{Hashimoto-Matsumura-JDE-2019} for details.

We note, however, that since our main purpose of this paper is to cover the whole range of $v_\pm$ satisfying \eqref{1.3}, which is a sufficient condition to guarantee the existence of the stationary wave $\phi(r)$, the above method developed in \cite{Hashimoto-Matsumura-JDE-2019} can not be applied and our main idea is to use the anti-derivative method to introduce the new unknown function
\begin{equation}\label{our-new-unknown}
w(t,r)=-\int_r^\infty(v(t,y)-\phi(y))dy
\end{equation}
and to use a space weighted energy method to deduce the desired nonlinear stability result. The key point in our analysis here is to introduce a suitable weight function $\chi(r)$ to overcome the difficulties induced by the non-monotonicity of the stationary wave $\phi(r)$ and the boundary condition. For details, see the properties of such a weight function $\chi(r)$ stated in Lemma \ref{3.4.1} and the proof of Theorems \ref{4.1}, \ref{4.3}, and \ref{4.5} given in Section 3. For the temporal convergence rates, both algebraically and exponentially, in addition to such a space weighted energy method, we also use the space-time weighted energy method employed in \cite{Kawashima-Matsumura-CMP-1985} and \cite{Yin-Zhao-KRM-2009}.

The rest of this paper is organized as follows. In Section 2, we first list some basic properties of the stationary wave $\phi_{v_-,v_+}(r)$, show how to construct the  weighted function $\chi(r)$ which will play an essential role in our analysis, and then state our main results. The proofs of our main results will be given in Section 3.

\vskip 2mm
\noindent \textbf{Notations:} We denote the usual Lebesgue space of square integrable functions over $(r_0,\infty)$ by $L^2=L^2((r_0,\infty))$ with norm $\|\cdot\|$ and for each non-negative integer $k$, we use $H^k$ to denote the corresponding $k$th-order Sobolev space $H^k((r_0,\infty))$ with norm $\|\cdot\|_{H^k}.$

Set $\langle r\rangle=\sqrt{1+r^2}$ and for $\alpha\in \mathbb{R},$ we denote the algebraic weighted Sobolev space, that is, the space of functions $f$ satisfying $\langle r\rangle^{\alpha/2}f\in H^k,$ by $H^{k,\alpha}$ with norm
$$
\|f\|_{k,\alpha}:=\left\|\langle r\rangle^{\alpha/2}f\right\|_{H^k}.
$$
For $k=0,$ we denote $\|\cdot\|_{0,\alpha}$ by $|\cdot|_\alpha$ for simplicity. We also denote the exponential weighted Sobolev space, that is, the space of functions $f$ satisfying $e^{\alpha r/2}f\in H^k$ for some $\alpha\in\mathbb{R}$, by $H^{k,\alpha}_{\exp}.$ For $k=0,$ we denote $\|\cdot\|^{0,\alpha}_{\exp}$ by $|\cdot|_{\alpha,\exp}$ for simplicity. For an interval $I\in \mathbb{R}^1$ and a Banach space $X,$ $C(I;X)$ denotes the space of continuous $X$-valued functions on $I,$ $C^k(I;X)$ the space of $k$-times continuously differentiable $X$-valued functions.

\section{Preliminaries and main results}

This section is devoted to the statement of our main results. Before doing so, we first collect some results obtained in \cite{Hashimoto-NonliAnal-2014, Hashimoto-OsakaJMath-2016, Hashimoto-Matsumura-JDE-2019} on the  stationary wave of (\ref{1.2}). Recall that $\phi(r)$ is called a stationary solution of (\ref{1.2}) if $\phi(r)$ solves the boundary value problem \eqref{SW-RS-Burgers}. Integrating the the first equation of \eqref{SW-RS-Burgers} with respect to $r$ from $r$ to $\infty,$ then it is easy to see that $\phi(r)$ satisfies
\begin{eqnarray}\label{2.2}
    \phi_r+\frac{(n-1)}{r}\phi&=&\frac1{2\mu}\left(\phi^2-v_+^2\right), \quad r>r_0,\nonumber\\
    \phi(r_0)&=&v_-, \\
    \lim\limits_{r\to\infty}\phi(r)&=&v_+.\nonumber
\end{eqnarray}

If we introduce a new unknown function $\psi(r)$ by
\begin{equation}\label{2.4}
  \psi(r)=\phi(r)-\frac{\mu(n-1)}{r},
\end{equation}
then \eqref{2.2} can be reformulated as
\begin{eqnarray}\label{2.3}
    \psi_r&=&\frac{1}{2\mu}\left(\psi^2-v_+^2\right)-\frac{c_0}{r^2},\quad r>r_0,\nonumber\\
    \psi(r_0)&=&V_-,\\
    \lim\limits_{r\to\infty}\psi(r)&=&v_+,\nonumber
\end{eqnarray}
where $c_0=\frac{\mu(n-1)(n-3)}{2}$ is a constant. Noticing that $c_0=0$ for $n=3,$ thus it is hopeful to expect that, for $n=3,$ the large time behavior of the unique global solution $v(t,r)$ to the initial-boundary value problem (\ref{1.2}) is similar to that of the corresponding initial-boundary value problem (\ref{1.1.1}) of the one-dimensional Burgers equation in the half line and this is the main reason why one can give a complete classification of the asymptotic behaviors of solution $v(t,r)$ to the initial-boundary value problem (\ref{1.2}), cf. \cite{Hashimoto-Matsumura-JDE-2019} for more details. For this reason, we will focus on the case $n\geq 4$ in the rest of this paper.

For the solvability of the boundary value problem \eqref{2.3} and the properties of its unique solution $\psi(r)$, we can get from Proposition 2.3 and Lemma 2.4 obtained in \cite{Hashimoto-Matsumura-JDE-2019} that

\begin{lemma}\label{2.5}
  Suppose that (\ref{1.3}) holds, then the boundary value problem \eqref{2.3} admits a unique smooth solution $\psi(r)$ satisfying
\begin{equation*}
\left|\psi(r)-v_+\right|:=\left|\phi(r)-v_+-\frac{\mu(n-1)}{r}\right|\leq O\left(r^{-2}\right),\quad r\to+\infty.
\end{equation*}
Consequently there exists a positive constant $C>0$ such that
\begin{eqnarray}\label{Properties of Stationary Wave}
    \psi(r)-v_+&\in& C^\infty\cap L^1\left(\left[r_0,\infty\right)\right),\nonumber\\
    \left|\frac{d^k\psi(r)}{d r^k}\right|&\leq& C,\quad k=1,2.
\end{eqnarray}
Moreover, if one assumes further that $v_\pm$ satisfy \eqref{Hashimoto-JDE-Assumptions-on-v+-}, then the estimates \eqref{Upper Negative Bound on SW} and \eqref{Monotonical Property of SW} hold for $r> r_0$.
\end{lemma}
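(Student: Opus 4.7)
The plan is to treat (2.3) as a Riccati boundary value problem and to extract existence, uniqueness, and the asymptotic profile through a Cole--Hopf linearization combined with a barrier (sub/supersolution) argument. I would begin by applying the Cole--Hopf transform $\psi(r)=-2\mu h'(r)/h(r)$, which reduces (2.3) to the linear equation
\begin{equation*}
h''(r)=\left(\frac{v_+^2}{4\mu^2}+\frac{c_0}{2\mu r^2}\right)h(r),\qquad r>r_0,
\end{equation*}
with the Robin condition $h'(r_0)/h(r_0)=-V_-/(2\mu)$. The potential is strictly positive for $n\geq 4$, so the equation is non-oscillatory; a WKB/Frobenius analysis at infinity produces two linearly independent solutions with asymptotics $h_\pm(r)\sim r^{\mp c_0/(2\mu|v_+|)}e^{\pm|v_+|r/(2\mu)}$. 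The unique (up to scaling) solution that is positive on $[r_0,\infty)$ and grows like $h_+$ yields a smooth $\psi$ satisfying $\psi(r)\to -|v_+|=v_+$, and the range condition $V_-\leq|v_+|$ in (1.3) is precisely what is needed for the admissible logarithmic derivative at $r_0$ to correspond to such a positive growing-mode solution.

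For a more direct existence argument one may also use comparison on (2.3) itself. Setting $f(r,\psi):=(\psi^2-v_+^2)/(2\mu)-c_0/r^2$, the constant $\bar\psi\equiv|v_+|$ satisfies $\bar\psi_r-f(r,\bar\psi)=c_0/r^2>0$ (using $c_0>0$ for $n\geq 4$) and $\bar\psi(r_0)\geq V_-$; a contact-point contradiction (at a hypothetical first crossing $r_*$ one would need $\psi_r(r_*)\geq 0$, yet the equation forces $\psi_r(r_*)=-c_0/r_*^2<0$) then gives $\psi\leq|v_+|$ as long as the solution exists. A sufficiently large negative constant serves as a subsolution, preventing blow-down, so the local solution extends globally to $[r_0,\infty)$.

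For the decay rate, set $\eta:=\psi-v_+$, which satisfies
\begin{equation*}
\eta_r-\frac{v_+}{\mu}\eta=\frac{\eta^2}{2\mu}-\frac{c_0}{r^2}.
\end{equation*}
Since $v_+<0$, the linear part is dissipative as $r\to\infty$, and I would run a bootstrap: the Cole--Hopf representation (or variation of constants together with the a priori bound on $\eta$) first gives $\eta\to 0$; matching with the particular solution $\eta_p(r)=-\mu c_0/(|v_+|r^2)+O(r^{-3})$ of the linearized equation then upgrades the decay to $|\eta(r)|=O(r^{-2})$. The $L^1$-integrability and $C^\infty$-smoothness of $\psi-v_+$ are immediate consequences, and the uniform bounds on $d^k\psi/dr^k$ for $k=1,2$ follow by differentiating (2.3) and inserting the bound on $\psi$.

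Finally, the estimates (1.12)--(1.13) under the stronger hypothesis (1.11) are Lemma 2.4 of \cite{Hashimoto-Matsumura-JDE-2019}, and I would reproduce them by the same barrier method: under (1.11) one has $\psi(r_0)=V_-\leq v_+<0$, so the contact-point argument applied against the constant $v_+$ itself yields $\psi(r)\leq v_+$ for all $r\geq r_0$. A direct differentiation of (2.2) gives the identity
\begin{equation*}
\phi_r(r)+\frac{\mu(n-1)^2}{2r^2}=\frac{1}{2\mu}\left(\psi^2-v_+^2\right),
\end{equation*}
which together with $\psi^2\geq v_+^2$ produces (1.13); (1.12) is then extracted by analysing the critical points of $\phi=\psi+\mu(n-1)/r$ via the quadratic satisfied at any local maximum, using (1.11) to rule out the positive root. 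The main technical hurdle I foresee is the bootstrap in the third paragraph: upgrading the qualitative limit $\eta\to 0$ to the sharp rate $O(r^{-2})$ requires a careful treatment of the nonlinearity $\eta^2/(2\mu)$, handled either by successive approximation on the integral equation or by a Gronwall-type comparison in a $\langle r\rangle^{2}$-weighted norm.
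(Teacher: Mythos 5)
The paper never proves this lemma: it is imported wholesale from Proposition 2.3 and Lemma 2.4 of \cite{Hashimoto-Matsumura-JDE-2019}, so there is no internal argument to compare against. Your proposal reconstructs those results from scratch, and the overall strategy --- Cole--Hopf linearization for existence, uniqueness and the far-field asymptotics, supersolution/subsolution comparison for the a priori bounds, and the algebraic identity $\phi_r+\mu(n-1)^2/(2r^2)=(\psi^2-v_+^2)/(2\mu)$ for the monotonicity property \eqref{Monotonical Property of SW} --- is sound and consistent with the structure of the cited proofs. The comparison arguments are correct ($\bar\psi\equiv|v_+|$ and $\bar\psi\equiv v_+$ are strict supersolutions precisely because the forcing $-c_0/r^2$ is strictly negative for $n\geq 4$), and your key identity is right, although it comes from substituting $\psi=\phi-\mu(n-1)/r$ into \eqref{2.3} rather than from ``differentiating \eqref{2.2}''; the arithmetic works because $(n-1)(n-3)/2+(n-1)=(n-1)^2/2$. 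The deduction of \eqref{Upper Negative Bound on SW} from the sign of $\phi$ at its critical points is also fine.

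Two points need repair. First, the claimed WKB asymptotics $h_\pm(r)\sim r^{\mp c_0/(2\mu|v_+|)}e^{\pm|v_+|r/(2\mu)}$ is wrong: for a potential of the form $\lambda^2+br^{-2}$ the correction to $e^{\pm\lambda r}$ is $1+O(r^{-1})$ (the solutions are $\sqrt{r}\,I_\nu(\lambda r)$ and $\sqrt{r}\,K_\nu(\lambda r)$, whose large-argument prefactors cancel the $\sqrt{r}$); a genuine power-law prefactor would arise only for a Coulomb-type potential $\lambda^2+br^{-1}$. As written, your formula yields $\psi-v_+=-2\mu h_+'/h_+-v_+=O(r^{-1})$, which is not $O(r^{-2})$ and, worse, not in $L^1$ --- and the $L^1$ bound on $\psi-v_+$ is exactly what the construction of the weight function in Lemma \ref{3.4.1} consumes. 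Your third paragraph independently recovers the correct rate from the linearized equation (the particular solution $-\mu c_0/(|v_+|r^2)$ is correct), so the conclusion stands, but the two computations contradict each other and the first must be fixed. Second, the assertion that $V_-\leq|v_+|$ is ``precisely what is needed'' for the prescribed slope at $r_0$ to select a positive growing-mode solution overstates matters and hides the one step that actually requires work: $h$ stays positive on $[r_0,\infty)$ if and only if $h'(r_0)/h(r_0)$ exceeds the logarithmic derivative of the recessive solution $h_-$, i.e.\ if and only if $V_-<\psi^*(r_0)$, where $\psi^*=-2\mu h_-'/h_-$ is the Riccati solution tending to $-v_+=|v_+|$; one must still verify that $\psi^*>|v_+|$ on all of $[r_0,\infty)$, which follows from the same contact-point argument since at any crossing of the level $|v_+|$ the equation forces $\psi^*_r<0$. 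With these two repairs your sketch is a complete and correct substitute for the citation.
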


Now we turn to consider the initial-boundary value problem \eqref{1.2}. If we take
\begin{equation}\label{3.2}
  w(t,r)=-\int_r^\infty(v(t,y)-\phi(y))dy,
\end{equation}
then it is easy to deduce from \eqref{1.2}, \eqref{2.4} and \eqref{2.3} that $w(t,r)$ solves the following initial-boundary value problem
\begin{eqnarray}\label{3.3}
    w_t+\psi w_r-\mu w_{rr}&=&-\frac12 w_r^2,\quad t>0,\ r>r_0,\nonumber\\
    w_r(t,r_0)&=&\lim\limits_{r\to\infty}w_r(t,r)=\lim\limits_{r\to\infty}w(t,r)=0,\quad t>0,\\
    w(0,r)&=&w_0(r):=-\int_r^\infty(v_0(y)-\phi(y))dy,\quad r>r_0.\nonumber
\end{eqnarray}

As pointed out above, compared with the initial-boundary value problem \eqref{1.1.1} for the one-dimensional Burgers equation in the half line, the main difficulty to yield the global solvability and the large time behavior of solutions to the initial-boundary value problem \eqref{3.3} is caused by the fact that the stationary wave $\phi(r)$ obtained in Lemma \ref{2.5} is no longer monotonic. Our main idea to overcome such a difficulty is to use the weighted energy method. To this end, we introduce the following weight function $\chi(r): [r_0,\infty)\rightarrow\mathbb{R}$:
\begin{equation}\label{3.4}
  \chi(r)=\exp\left(-\frac1\mu\int_{r_0}^r\psi(s)ds\right)\int_r^\infty \left(\frac2{r_0}-\frac 1s\right)\exp\left(\frac1\mu\int_{r_0}^s\psi(\tau)d\tau\right)ds.
\end{equation}
The properties of such a weight function $\chi(r)$ can be summarized in the following lemma:
\begin{lemma}\label{3.4.1}
  Suppose that (\ref{1.3}) holds, then $\chi(r)$ satisfies
\begin{itemize}
\item There exist positive constants $c_i (i=1,2)$ such that
$$
0< c_1\leq \chi(r)\leq c_2<\infty
$$
holds for each $r\geq r_0$;
\item $\chi(r)$ solves
$$
\frac{d\chi(r)}{dr}+\frac{\psi(r)}{\mu}\chi(r)=\frac 1r-\frac2{r_0},\quad r>r_0
$$
and consequently
\begin{eqnarray*}
\left.\left(\frac{d\chi(r)}{dr}+\frac{\psi(r)}{\mu}\chi(r)\right)\right|_{r=r_0}&=&-\frac {1}{r_0}<0,\\
\frac{d^2\chi(r)}{dr^2}+\frac{d}{dr}\left(\frac{\psi(r)}{\mu}\chi(r)\right)&=&-\frac 1{r^2}<0,\quad r>r_0.
\end{eqnarray*}
\end{itemize}
\end{lemma}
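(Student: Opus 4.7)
The formula defining $\chi$ is exactly the variation-of-parameters solution of the first-order linear ODE claimed in the second bullet, so the substantive content of the lemma is the two-sided pointwise bound $0<c_1\le\chi(r)\le c_2$ on $[r_0,\infty)$. My plan is to (i) verify convergence and positivity of the integral, (ii) extract the uniform upper and lower bounds using the negativity of $v_+$ and the properties of $\psi$ recorded in Lemma \ref{2.5}, and then (iii) obtain the ODE and its consequences by direct differentiation of the defining formula.

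For (i) I would set $A(r):=\frac{1}{\mu}\int_{r_0}^r\psi(s)\,ds$ and observe that, by Lemma \ref{2.5}, $\psi-v_+\in L^1([r_0,\infty))$, so
$$A(r)=\frac{v_+(r-r_0)}{\mu}+\frac{1}{\mu}\int_{r_0}^r(\psi(s)-v_+)\,ds=\frac{v_+ r}{\mu}+O(1).$$
Since $v_+<0$, the factor $e^{A(s)}$ decays exponentially in $s$. Because $2/r_0-1/s\ge 1/r_0>0$ on $[r_0,\infty)$, the integral defining $\chi(r)$ converges absolutely to a strictly positive number, and dominated convergence gives continuity of $\chi$ in $r$.

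For the upper bound in (ii), I would pick $R_1\ge r_0$ with $\psi(s)\le v_+/2$ for $s\ge R_1$, which is possible by Lemma \ref{2.5}. Then for $r\ge R_1$,
$$\chi(r)\le\frac{2}{r_0}\int_r^\infty e^{A(s)-A(r)}\,ds\le\frac{2}{r_0}\int_r^\infty e^{v_+(s-r)/(2\mu)}\,ds=-\frac{4\mu}{r_0 v_+},$$
and continuity handles the compact interval $[r_0,R_1]$. For the lower bound, I would use $2/r_0-1/s\ge 1/r_0$ together with the fact that $\psi$, being continuous on $[r_0,\infty)$ with finite limit $v_+$, is bounded; writing $m:=\inf_{s\ge r_0}\psi(s)$ gives $A(s)-A(r)\ge m/\mu$ on $[r,r+1]$, and therefore
$$\chi(r)\ge\frac{1}{r_0}\int_r^{r+1}e^{A(s)-A(r)}\,ds\ge\frac{e^{m/\mu}}{r_0}>0.$$

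For (iii), differentiating $\chi(r)=e^{-A(r)}\int_r^\infty(2/r_0-1/s)e^{A(s)}\,ds$ is straightforward: the outer factor contributes $-A'(r)\chi(r)=-\psi(r)\chi(r)/\mu$, while differentiating the lower limit of the integral contributes $-(2/r_0-1/r)$, giving
$$\chi'(r)+\frac{\psi(r)}{\mu}\chi(r)=\frac{1}{r}-\frac{2}{r_0}.$$
Evaluating at $r=r_0$ yields $-1/r_0<0$, and differentiating the ODE once more in $r$ gives $\chi''(r)+\frac{d}{dr}\bigl(\psi(r)\chi(r)/\mu\bigr)=-1/r^2<0$. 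The only real obstacle in this plan is the uniform upper bound: without exploiting the strict negativity of $v_+$ through a threshold of the form $\psi\le v_+/2$, the cancellation between $e^{-A(r)}$ and $\int_r^\infty e^{A(s)}\,ds$ remains only formal, and one must pass to an exponentially convergent majorant in order to get a constant independent of $r$.
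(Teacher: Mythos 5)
Your proof is correct and rests on the same basic mechanism as the paper's — the exponential decay of the integrating factor coming from $v_+<0$ together with $\psi-v_+\in L^1([r_0,\infty))$ — but the two arguments are organized and finished differently. The paper proves a more general statement: for any $f\in C^1([r_0,\infty))$ with $f(r_0)<0$, $f'<0$ and $f'\in L^1$, it writes $\widetilde{\chi}=B/A$ with $A(r)=\exp\bigl(\frac1\mu\int_{r_0}^r\psi\bigr)$ and $B(r)=\int_r^\infty|f(s)|A(s)\,ds$, obtains the upper bound by estimating $A$ and $B$ separately in terms of $\|\psi-v_+\|_{L^1}$, and obtains the positive lower bound by computing $\lim_{r\to\infty}\widetilde{\chi}(r)=\frac{\mu}{|v_+|}\lim_{r\to\infty}|f(r)|\in(0,\infty)$ via l'Hopital's rule and then combining this with strict positivity and continuity on compact sets; the stated lemma follows by taking $f(r)=\frac1r-\frac2{r_0}$. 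You work directly with this specific $f$, get the upper bound from a threshold $R_1$ beyond which $\psi\le v_+/2$ (plus continuity on $[r_0,R_1]$), and — the more substantive difference — get an explicit, $r$-uniform lower bound $e^{m/\mu}/r_0$ by integrating only over the window $[r,r+1]$, which avoids both l'Hopital's rule and any compactness step for the lower bound. The paper's extra generality is not idle: it is reused in a later remark, where a different choice of $f$ yields sharper a priori estimates; your version is more elementary and produces explicit constants. The verification of the ODE and its two consequences is the same direct differentiation in both arguments, and I see no gap in yours.
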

\begin{proof} In the following, we will prove a more general result. For each $f(r)\in C^1([r_0,\infty))$ satisfying
\begin{equation}\label{3.6}
f'(r)<0,\quad f(r_0)<0,\quad f'(r)\in L^1([r_0,\infty)).
\end{equation}
A direct corollary of the above assumptions is that the limit $\lim\limits_{r\to+\infty}|f(r)|$ exists and satisfies
\begin{equation}\label{3.6-2}
0<\lim\limits_{r\to+\infty}|f(r)|<+\infty.
\end{equation}

Now we set
\begin{equation}\label{3.5}
\widetilde{\chi}(r)=-\exp\left(-\frac1\mu\int_{r_0}^r\psi(s)ds\right)\int_r^\infty f(s)\exp\left(\frac 1\mu\int_{r_0}^s\psi(\tau)d\tau\right)ds.
\end{equation}
It is easy to see from the assumptions \eqref{3.6} and \eqref{3.6-2} imposed on $f(r)$ and the properties of $\psi(r)$ obtained in Lemma \ref{2.5} that $\widetilde{\chi}(r)$ is well-defined and satisfies
\begin{equation}\label{chi-differential equation-1}
\frac{d\widetilde{\chi}(r)}{dr}+\frac{\psi(r)}{\mu}\widetilde{\chi}(r)=f(r),\quad r>r_0.
\end{equation}
From which one can conclude from the assumptions \eqref{3.6} imposed on $f(r)$ that
\begin{eqnarray}\label{Properties of Chi}
\left.\left(\frac{d\widetilde{\chi}(r)}{dr}+\frac{\psi(r)}{\mu}\widetilde{\chi}(r)\right)\right|_{r=r_0}&=&f(r_0)<0,\nonumber\\
\frac{d^2\widetilde{\chi}(r)}{dr^2}+\frac{d}{dr}\left(\frac{\psi(r)}{\mu}\widetilde{\chi}(r)\right)&=&f'(r)<0,\quad r>r_0.
\end{eqnarray}

Now we prove that there exist positive constants $c_i (i=1,2)$ such that
\begin{equation}\label{bounds on weight function}
0< c_1\leq \widetilde{\chi}(r)\leq c_2<\infty
\end{equation}
holds for each $r\geq r_0$.

For this purpose, set
$$
A(r)=\exp\left(\frac1\mu\int_{r_0}^r\psi(s)ds\right),\quad  B(r)=\int_r^\infty|f(s)|A(s)ds,
$$
then it is easy to see that $A(r)>0, B(r)>0$ hold for $r\in[r_0,\infty)$ and consequently one can deduce that $\widetilde{\chi}(r)>0$ for each $r\geq r_0$.

Noticing that
\begin{eqnarray*}
\int^r_{r_0}\psi(s)ds&=&\int^r_{r_0}\left(\psi(s)-v_+\right)ds+v_+\left(r-r_0\right),\\
f(r)&=&f(r_0)+\int^r_{r_0}f'(s)ds,
\end{eqnarray*}
we can get from the facts that $v_+<0$, $\psi(r)-v_+\in L^1([r_0,\infty))$ and the assumptions imposed on $f(r)$ that
\begin{eqnarray}\label{3.7}
      A(r)&=&\exp\left(-\frac{|v_+|}{\mu}(r-r_0)\right) \exp\left(\frac1\mu\int_{r_0}^r(\psi(s)-v_+)ds\right)\nonumber\\
      &\leq& \exp\left(\frac1\mu\|\psi-v_+\|_{L^1}\right)
\end{eqnarray}
and
\begin{eqnarray}\label{3.8}
       B(r)&\leq& \left(\left|f\left(r_0\right)\right|+\left\|f'\right\|_{L^1}\right) \exp\left(\frac1\mu\|\psi-v_+\|_{L^1}\right)\int_r^\infty \exp\left(-\frac{|v_+|}{\mu}(s-r_0)\right)ds\nonumber\\
       &=&\frac{\mu\left(\left|f(r_0)\right|+\|f'\|_{L^1}\right)}{|v_+|} \exp\left(\frac1\mu\|\psi-v_+\|_{L^1}-\frac{|v_+|}\mu(r-r_0)\right)\\
       &\leq& \frac{\mu\left(\left|f(r_0)\right|+\|f'\|_{L^1}\right)} {|v_+|} \exp\left(\frac1\mu\|\psi-v_+\|_{L^1}\right)\nonumber
\end{eqnarray}
hold for each $r\geq r_0$. Moreover, the estimates \eqref{3.7} and \eqref{3.8} implies that
\begin{equation}\label{3.9}
    \lim\limits_{r\to+\infty} A(r)=\lim\limits_{r\to+\infty}B(r)=0.
\end{equation}

From \eqref{3.9} and l'Hopital's rule, one can deduce from the assumption \eqref{3.6-2} imposed on $f(r)$ that
\begin{equation}\label{3.10}
  \lim\limits_{r\to\infty}\widetilde{\chi}(r)=\lim\limits_{r\to\infty}\frac{B(r)}{A(r)} =\lim\limits_{r\to\infty}\frac{B'(r)}{A'(r)} =\frac\mu{|v_+|}\lim\limits_{r\to\infty}|f(r)|\in (0,+\infty).
\end{equation}

Having obtained \eqref{3.7}, \eqref{3.8} and \eqref{3.10}, the estimate \eqref{bounds on weight function} follows immediately.

If we take $f(r)=\frac1r-\frac2{r_0}$, then one can easily deduce the results stated in Lemma \ref{3.4.1}.
\end{proof}

With the above preparations in hand, we now turn to state our main results. The first result is concerned with the nonlinear stability of the stationary wave $\phi(r)$ constructed in Lemma \ref{2.5}.
\begin{theorem}\label{4.1}
  Suppose that (\ref{1.3}) holds and $w_0\in H^2.$ Then, there exists a sufficiently small positive constant $\epsilon$ such that if $\|w_0\|_{H^2}\leq\epsilon,$ the initial-boundary value problem (\ref{1.2}) admits a unique global solution $v(t,r)$ satisfying
  \begin{equation*}
    v(t,r)-\phi(r)\in C(0,\infty;H^1)\cap L^2_{loc}(0,\infty;H^2)
  \end{equation*}
  and
  \begin{equation}\label{4.2}
    \lim\limits_{t\to +\infty}\sup_{r\geq r_0}|v(t,r)-\phi(r)|=0.
  \end{equation}
\end{theorem}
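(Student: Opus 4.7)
The strategy is to work with the antiderivative variable $w(t,r)$ defined in \eqref{3.2}, which satisfies the transformed problem \eqref{3.3}. Since $v-\phi = w_r$, a uniform-in-time $H^2$ bound on $w$ gives both the global existence statement and, via Sobolev embedding plus an ``$f\in L^1_t$, $\tfrac{d}{dt}f\in L^\infty_t$ $\Rightarrow$ $f\to 0$''-type argument, the uniform decay \eqref{4.2}. The proof follows the standard scheme: local well-posedness for \eqref{3.3} in $H^2$ by contraction mapping (routine), a priori $H^2$ estimates under the smallness assumption $\|w_0\|_{H^2}\leq\epsilon$, and a continuation argument. The heart of the matter is the a priori estimate, which must cope with the non-monotonicity of $\phi$ (equivalently $\psi$) in the regime \eqref{1.3}.

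For the zeroth-order estimate I multiply \eqref{3.3} by $\chi(r)w$ and integrate over $(r_0,\infty)$. Using $w_r(t,r_0)=0$ and $w,w_r\to 0$ as $r\to\infty$, integration by parts on the convective and diffusive terms gives
\begin{equation*}
\tfrac{1}{2}\tfrac{d}{dt}\!\!\int_{r_0}^{\infty}\!\!\chi w^2\,dr + \mu\!\!\int_{r_0}^{\infty}\!\!\chi w_r^2\,dr - \tfrac{\mu}{2}\!\!\int_{r_0}^{\infty}\!\!\bigl[\chi''+\tfrac{1}{\mu}(\psi\chi)_r\bigr]w^2\,dr - \tfrac{1}{2}\bigl[\mu\chi'(r_0)+\psi(r_0)\chi(r_0)\bigr]w^2(t,r_0) = -\tfrac{1}{2}\!\!\int_{r_0}^{\infty}\!\!\chi w w_r^2\,dr.
\end{equation*}
By Lemma \ref{3.4.1} the interior bracket equals $-1/r^2$ and the boundary bracket equals $-\mu/r_0$, so both contributions flip to positive dissipation-type quantities $\frac{\mu}{2}\int w^2/r^2\,dr$ and $\frac{\mu}{2r_0}w^2(t,r_0)$. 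The cubic right-hand side is absorbed via $\|w_r\|_{L^\infty}\leq C\|w_r\|^{1/2}\|w_{rr}\|^{1/2}\leq C\epsilon$, yielding $\sup_{0\leq\tau\leq t}\|w(\tau)\|^2 + \int_0^t\bigl(\|w_r(\tau)\|^2 + w^2(\tau,r_0)\bigr)\,d\tau \leq C\|w_0\|^2$.

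For the higher derivatives one uses unweighted estimates. Multiplying \eqref{3.3} by $-w_{rr}$, the boundary traces vanish (thanks to $w_r(t,r_0)=0$) and one obtains
\begin{equation*}
\tfrac{1}{2}\tfrac{d}{dt}\|w_r\|^2 + \mu\|w_{rr}\|^2 = -\tfrac{1}{2}\!\int_{r_0}^{\infty}\!\psi_r w_r^2\,dr - \tfrac{1}{2}\!\int_{r_0}^{\infty}\! w_r^2 w_{rr}\,dr.
\end{equation*}
Since $|\psi_r|\leq C$ by Lemma \ref{2.5} and $\int_0^t\|w_r\|^2\,d\tau$ is already controlled, Gronwall closes the $H^1$ bound. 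For the $H^2$ level, differentiate \eqref{3.3} in $r$ and test against $-w_{rrr}$; the only subtle new ingredient is the boundary trace at $r_0$, which is controlled via the identity $\mu w_{rr}(t,r_0)=w_t(t,r_0)$ (obtained by evaluating \eqref{3.3} at $r=r_0$) combined with the trace estimate already produced at the zeroth-order step. This delivers $\sup_{\tau\leq t}\|w(\tau)\|_{H^2}^2 + \int_0^t\|w_r(\tau)\|_{H^2}^2\,d\tau \leq C\|w_0\|_{H^2}^2$. Continuation then globalises the local solution, and \eqref{4.2} follows from the time integrability of $\|w_r\|^2$ and the uniform continuity of $\|w_r\|^2$ in $t$ (yielding $\|w_r(t)\|\to 0$), combined with $\|v(t,\cdot)-\phi\|_{L^\infty}\leq C\|w_r(t)\|^{1/2}\|w_{rr}(t)\|^{1/2}$.

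The principal obstacle is the zeroth-order estimate. Without the weight $\chi$, the sign-indefinite factors $(\psi\chi)_r+\mu\chi''$ and $\mu\chi'(r_0)+\psi(r_0)\chi(r_0)$ obstruct the energy method, precisely because $\phi$ is not monotonic when $V_->v_+$. The construction of $\chi$ in \eqref{3.4} is engineered so that $\mu\chi'+\psi\chi$ equals a specific function $\mu f$ with $f(r_0)<0$ and $f'(r)<0$, which is exactly what makes both the boundary and interior contributions positive definite; the uniform two-sided bound $0<c_1\leq\chi\leq c_2$ rests on the strict negativity of $v_+$, driving the decay of the integrating factor as in \eqref{3.7}--\eqref{3.10}. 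All other steps are essentially bookkeeping once this sign structure has been secured.
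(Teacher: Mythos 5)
Your proposal is correct and follows essentially the same route as the paper: the anti-derivative variable $w$ of \eqref{3.2}, the weighted zeroth-order estimate in which the weight $\chi$ of Lemma \ref{3.4.1} turns both the interior bracket and the boundary bracket into positive dissipation terms $\frac{\mu}{2}\int_{r_0}^{\infty}\frac{w^2}{r^2}\,dr$ and $\frac{\mu}{2r_0}w^2(t,r_0)$, unweighted first- and second-order estimates closed by smallness of $N(T)$, and a continuation argument plus the standard $L^1_t$/uniform-continuity reasoning for \eqref{4.2}. The only (harmless) deviation is at the $H^2$ level, where you test the once-differentiated equation against $-w_{rrr}$ instead of the twice-differentiated equation against $w_{rr}$; the boundary contribution there is $\frac12\psi(r_0)w_{rr}^2(t,r_0)$ (the term $w_{rt}(t,r_0)w_{rr}(t,r_0)$ vanishes since $w_r(t,r_0)\equiv 0$), and it is absorbed by the trace interpolation $|w_{rr}(t,r_0)|^2\leq C\|w_{rr}\|\,\|w_{rrr}\|$ exactly as in the paper's estimate \eqref{5.9}, rather than by the identity $\mu w_{rr}(t,r_0)=w_t(t,r_0)$ or by anything produced at the zeroth-order step.
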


For the temporal convergence rates of the unique global solution $v(t,r)$ of the initial-boundary value problem (\ref{1.2}) toward the stationary wave $\phi(r)$, we first have the following result on the algebraic decay rates.
\begin{theorem}\label{4.3}
  Under the assumptions imposed in Theorem \ref{4.1}, then for any $\alpha>0,$ if we assume further that $w_0\in L^2_\alpha,$  we can get that
  \begin{equation}\label{4.4}
    \sup_{r\geq r_0}|v(t,r)-\phi(r)|\leq C\left(\|w_0\|_{H^2}+|w_0|_\alpha\right)(1+t)^{-\frac \alpha 2}
  \end{equation}
holds for $t\geq 0$. Here $C$ is some time-independent positive constant $C>0$.
\end{theorem}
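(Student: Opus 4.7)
Since Theorem~\ref{4.1} has already supplied a global solution $w\in C([0,\infty);H^2)$ of \eqref{3.3} under the smallness $\|w_0\|_{H^2}\leq\epsilon$, the task is to upgrade the basic energy estimate into an algebraic-in-time decay by adjoining a temporal weight to the space-weighted method of Section~3, in the spirit of \cite{Kawashima-Matsumura-CMP-1985} and \cite{Yin-Zhao-KRM-2009}. First, I would multiply \eqref{3.3} by $\chi(r)\langle r\rangle^{\alpha}w$ and integrate over $(r_0,\infty)$. Two successive integrations by parts in the diffusion term, combined with the boundary condition $w_r(t,r_0)=0$ and the structural identities of Lemma~\ref{3.4.1} (namely $d\chi/dr+\psi\chi/\mu=1/r-2/r_0$ and its value $-1/r_0$ at $r=r_0$), produce a favorable boundary contribution proportional to $r_0^{-1}w(t,r_0)^{2}$, a favorable bulk dissipation $\int\chi\langle r\rangle^{\alpha}w_r^{2}\,dr$, and a favorable interior absorption $\int r^{-2}\langle r\rangle^{\alpha}w^{2}\,dr$. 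The cubic term $-\tfrac12 w_r^{2}$ on the right-hand side of \eqref{3.3} is absorbed using the smallness of $\sup_{r}|w_r|$ inherited from Theorem~\ref{4.1}. This yields the space-weighted a priori bound
\[
|w(t)|_\alpha^{2}+\int_0^{t}\!\!\!\int_{r_0}^{\infty}\chi(r)\langle r\rangle^{\alpha}w_r^{2}\,dr\,d\tau\leq C\bigl(\|w_0\|_{H^2}^{2}+|w_0|_\alpha^{2}\bigr).
\]

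Second, to extract the time decay, for each integer $k$ with $0\leq k\leq\alpha$ I would multiply \eqref{3.3} by $(1+t)^{k}\chi(r)\langle r\rangle^{\alpha-k}w$ and integrate in $r$ and $t$. The new term $k\int_0^{t}(1+\tau)^{k-1}\!\int\chi\langle r\rangle^{\alpha-k}w^{2}\,dr\,d\tau$ coming from differentiating the temporal weight is controlled by the space-weighted estimate at step $k-1$: the extra power of $\langle r\rangle$ compared to the dissipation at step $k-1$ is recovered from a weighted Hardy-type inequality on $(r_0,\infty)$, whose boundary contribution is dominated by the $r_0^{-1}w(t,r_0)^{2}$ term already produced by $\chi$. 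Induction on $k$, together with a short interpolation at the end of the chain to reach arbitrary real $\alpha>0$, yields
\[
(1+t)^{\alpha}\|w(t)\|^{2}+\int_0^{t}(1+\tau)^{\alpha}\|w_r(\tau)\|^{2}\,d\tau\leq C\bigl(\|w_0\|_{H^2}^{2}+|w_0|_\alpha^{2}\bigr).
\]
Running the same scheme on the equations obtained by differentiating \eqref{3.3} once and twice in $r$, and using the dissipation integral just obtained to absorb the time-derivative-of-weight terms, I would deduce
\[
(1+t)^{\alpha}\bigl(\|w_r(t)\|^{2}+\|w_{rr}(t)\|^{2}\bigr)\leq C\bigl(\|w_0\|_{H^2}^{2}+|w_0|_\alpha^{2}\bigr).
\]

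Finally, since $v(t,r)-\phi(r)=w_r(t,r)$ and $w_r(t,\cdot)\to 0$ as $r\to\infty$, the one-dimensional Sobolev inequality on $(r_0,\infty)$ gives
\[
\sup_{r\geq r_0}|v(t,r)-\phi(r)|^{2}\leq 2\|w_r(t)\|\,\|w_{rr}(t)\|\leq C\bigl(\|w_0\|_{H^2}^{2}+|w_0|_\alpha^{2}\bigr)(1+t)^{-\alpha},
\]
which is \eqref{4.4}. The main obstacle lies in the induction step: at level $k$ the time-weight derivative produces an integrand whose spatial weight is one power of $\langle r\rangle$ heavier than the dissipation available at level $k-1$, and closing the loop requires a carefully chosen weighted Hardy inequality in which the boundary value $w(t,r_0)$ is picked up with the correct sign and absorbed by the $r_0^{-1}w(t,r_0)^{2}$ term furnished by $\chi$. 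A secondary difficulty is that the initial datum is weighted in $L^{2}$ but not in $H^{2}$, which forces the time-weighted argument for $w_r$ and $w_{rr}$ to be run without an additional spatial weight; this is precisely what determines the exponent $-\alpha/2$ in \eqref{4.4} rather than a faster rate.
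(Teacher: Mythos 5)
Your proposal follows essentially the same route as the paper: the space--time weighted energy method of Kawashima--Matsumura applied to the $\chi$-weighted identity \eqref{5.2.1} with weight $(1+t)^{\gamma}\langle r\rangle^{\beta}$, induction over integers $k$ with $\beta=\alpha-k$, an $\epsilon$-perturbed time weight to handle non-integer $\alpha$, time-weighted but spatially unweighted estimates for $w_r$ and $w_{rr}$, and the Sobolev bound $\sup_{r\geq r_0}|w_r|\leq C\|w_r\|^{1/2}\|w_{rr}\|^{1/2}$ at the end --- this is precisely the content of Lemmas \ref{5.15}--\ref{5.21}. One correction to your stated ``main obstacle'': no weighted Hardy inequality is needed to close the induction, because the dissipation produced at level $k-1$ (weight $\langle r\rangle^{\alpha-k+1}$) is $\beta\,|w|^2_{\beta-1}$ with $\beta=\alpha-k+1$, i.e.\ it already carries exactly the weight $\langle r\rangle^{\alpha-k}$ appearing in the term generated by differentiating $(1+t)^{k}$ at level $k$; this extra dissipation arises directly from pairing $\frac{d}{dr}\langle r\rangle^{\beta}$ with the good flux term $\left(\frac{1}{r_0}-\frac{1}{2r}\right)w^2$ that the weight $\chi$ was designed to produce (the lower bound $A_\beta(r)\geq\beta/(2\langle r_0\rangle)$ in \eqref{5.24}), while the genuine technical nuisance is instead the cross term $\beta\langle r\rangle^{\beta-1}|ww_r|$, which the paper absorbs by splitting the integral at a large radius $R$.
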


For the temporal exponential decay estimates, we have
\begin{theorem}\label{4.5}
  Under the assumptions listed in Theorem \ref{4.1}, for any $\beta$ and $\gamma$ satisfies
  \begin{equation}\label{4.6}
      0< \beta\leq \min\left\{\frac2{r_0},\frac{8}{(8\|\chi\|_{L^\infty}+1)r_0}\right\},\quad 0<\gamma\leq\frac{3\mu\beta}{8r_0\|\chi\|_{L^\infty}},
  \end{equation}
if we assume further that $w_0\in H^{2,\beta}_{\exp},$ then there exists a time-independent positive constant $C>0$ such that
   \begin{equation}
     \sup_{r\geq r_0}|v(t,r)-\phi(r)|\leq C\|w_0\|_{\exp}^{2,\beta}\exp(-\gamma t)
   \end{equation}
holds for all $t\geq 0$.
\end{theorem}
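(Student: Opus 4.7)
The plan is to extend the weighted energy method of Theorem \ref{4.1} by inserting both a spatial exponential weight $e^{\beta r}$ and a temporal exponential weight $e^{\gamma t}$, in the spirit of the space-time weighted energy scheme of \cite{Kawashima-Matsumura-CMP-1985} and \cite{Yin-Zhao-KRM-2009}. Introducing
\[
\mathcal{E}_k(t) := e^{\gamma t}\int_{r_0}^\infty e^{\beta r}\chi(r)\,\bigl(\partial_r^k w(t,r)\bigr)^2\,dr,\qquad k=0,1,2,
\]
I would aim for a uniform-in-$t$ bound $\sum_{k=0}^2 \mathcal{E}_k(t) \leq C \bigl(\|w_0\|^{2,\beta}_{\exp}\bigr)^2$, which gives $\|w_r(t)\|_{H^1}\leq C e^{-\gamma t/2}$ and hence, by the one-dimensional Sobolev embedding applied to $v(t,r)-\phi(r)=w_r(t,r)$, the pointwise exponential decay claimed in the theorem.

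The decisive step is the zeroth-order estimate. Multiplying (\ref{3.3}) by $e^{\gamma t} e^{\beta r}\chi(r) w$, integrating over $(r_0,\infty)$, and using $w_r(t,r_0)=0$ together with $w(t,\infty)=0$, the viscous and transport terms reorganize into an identity of the form
\[
\tfrac12\tfrac{d}{dt}\mathcal{E}_0(t) - \tfrac{\gamma}{2}\mathcal{E}_0(t) + \mu e^{\gamma t}\int_{r_0}^\infty e^{\beta r}\chi\, w_r^2\,dr + e^{\gamma t} \mathcal{B}(t) + e^{\gamma t}\mathcal{I}(t) = e^{\gamma t}\mathcal{N}(t),
\]
where $\mathcal{B}(t)$ collects the boundary contribution at $r=r_0$, $\mathcal{I}(t)$ is the remaining interior zero-order term in $w^2$, and $\mathcal{N}(t)$ absorbs the cubic nonlinearity. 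The design of $\chi$ encoded in Lemma \ref{3.4.1}, specifically the identities $\chi'(r_0)+\psi(r_0)\chi(r_0)/\mu=-1/r_0$ and $\chi''(r)+(\psi\chi/\mu)'(r)=-1/r^2$, produces $\mathcal{B}(t) \geq \tfrac{\mu}{2r_0}e^{\beta r_0}w(t,r_0)^2$ and $\mathcal{I}(t) \geq \tfrac{\mu}{2}\int_{r_0}^\infty e^{\beta r} w^2/r^2\,dr$, modulo $O(\beta)$ corrections arising from differentiating $e^{\beta r}$; these corrections are absorbed into the main terms once $\beta \leq 2/r_0$, the first bound in (\ref{4.6}).

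The second block is the absorption of the bad $\gamma \mathcal{E}_0$ term by the dissipation. Integrating by parts with $\partial_r(e^{\beta r}) = \beta e^{\beta r}$ and using $w(t,\infty)=0$ yields the weighted Poincar\'e-type bound
\[
\int_{r_0}^\infty e^{\beta r} w^2\,dr \leq \tfrac{2}{\beta}e^{\beta r_0}w(t,r_0)^2 + \tfrac{4}{\beta^2}\int_{r_0}^\infty e^{\beta r} w_r^2\,dr,
\]
so that $\gamma \mathcal{E}_0(t) \leq \gamma \|\chi\|_{L^\infty}e^{\gamma t}\bigl(\tfrac{2}{\beta}e^{\beta r_0}w(t,r_0)^2+\tfrac{4}{\beta^2}\int_{r_0}^\infty e^{\beta r}w_r^2\,dr\bigr)$. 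The constraints $\beta \leq 8/((8\|\chi\|_{L^\infty}+1)r_0)$ and $\gamma \leq 3\mu\beta/(8r_0\|\chi\|_{L^\infty})$ in (\ref{4.6}) are tuned exactly so that the two summands above are dominated, respectively, by $\mathcal{B}(t)$ and by the viscous dissipation $\mu e^{\gamma t}\int e^{\beta r}\chi w_r^2\,dr$, leaving a coercive left-hand side. The cubic term $\mathcal{N}(t)$ is bounded by $\|w(t)\|_{L^\infty}$ times the dissipation and is therefore swallowed using the a priori smallness of $\|w\|_{H^2}$ supplied by Theorem \ref{4.1}.

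The first- and second-order estimates are then obtained by differentiating (\ref{3.3}) in $r$ once and twice and repeating the same procedure with $w_r$ and $w_{rr}$ in place of $w$. The boundary condition $w_r(t,r_0)=0$ continues to kill boundary terms at each level, and the commutator terms involving $\psi_r,\psi_{rr}$ remain harmless thanks to Lemma \ref{2.5}. Summing the three estimates and running a standard continuation argument together with the local existence result from Theorem \ref{4.1} closes the bound on $\sum_k\mathcal{E}_k(t)$. The principal technical obstacle is the bookkeeping at the zeroth order: one must verify that, after inserting $e^{\beta r}$, the positive interior and boundary terms produced by the ODE identities of Lemma \ref{3.4.1} still dominate the new $\beta$-contributions from $(e^{\beta r}\chi)' = (\beta\chi+\chi')e^{\beta r}$ and simultaneously absorb $\gamma\mathcal{E}_0$; this is precisely where the sharp constants in the smallness hypothesis (\ref{4.6}) originate.
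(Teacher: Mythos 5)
Your overall architecture (the space weight $e^{\beta r}$ combined with the weight $\chi$ from Lemma \ref{3.4.1}, the time weight $e^{\gamma t}$, three levels of $r$-derivatives, and the Sobolev embedding applied to $w_r=v-\phi$ at the end) is the same as the paper's. The problem lies in the one step you yourself single out as decisive: the absorption of $\gamma\mathcal{E}_0(t)$. You propose to control $\int_{r_0}^\infty e^{\beta r}w^2\,dr$ by the weighted Poincar\'e inequality with constant $4/\beta^2$ and then absorb the result into the viscous dissipation $\mu\int_{r_0}^\infty e^{\beta r}\chi w_r^2\,dr$ and the boundary term. The $\beta$-scaling of that Poincar\'e constant is sharp, so this route only closes under a condition of the form $\gamma\lesssim \mu\beta^2$ (with constants involving the lower bound of $\chi$). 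The theorem, however, asserts decay for every $\gamma\leq \frac{3\mu\beta}{8r_0\|\chi\|_{L^\infty}}$, which is of order $\beta$, not $\beta^2$; for small $\beta$ the range of $\gamma$ your argument can reach is strictly smaller than the one claimed, so as written it does not prove the stated theorem (it does prove exponential decay at some smaller rate).

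The missing ingredient is that the term you dismiss as an ``$O(\beta)$ correction'' is in fact the source of the needed zeroth-order dissipation. When $e^{\beta r}$ is applied to the flux $-\mu\bigl(\chi ww_r+(\tfrac{1}{r_0}-\tfrac{1}{2r})w^2\bigr)_r$ in \eqref{5.2.1}, integration by parts produces the \emph{positive} interior term $\mu\beta\int_{r_0}^\infty e^{\beta r}\bigl(\tfrac1{r_0}-\tfrac1{2r}\bigr)w^2\,dr$, which combines with $\tfrac{\mu}{2}\int_{r_0}^\infty e^{\beta r}w^2/r^2\,dr$ into $\mu\int_{r_0}^\infty e^{\beta r}B_\beta(r)w^2\,dr$ with $B_\beta(r)=\tfrac{1}{2r^2}+\beta\bigl(\tfrac1{r_0}-\tfrac1{2r}\bigr)$. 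The paper's Lemma \ref{5.3.0} shows $B_\beta(r)\geq\max\bigl\{\tfrac{3\beta}{4r_0},\beta^2\chi(r)\bigr\}$ precisely under the two upper bounds on $\beta$ in \eqref{4.6}: the lower bound $\tfrac{3\beta}{4r_0}$ yields a zeroth-order dissipation of size $\beta$ that absorbs $\gamma\|\chi\|_{L^\infty}\int e^{\beta r}w^2\,dr$ for the full stated range of $\gamma$, while the bound $\beta^2\chi\leq B_\beta$ is what absorbs the cross term $-\mu\beta\int e^{\beta r}\chi ww_r\,dr$, which your bookkeeping should also display explicitly. If you retain these $O(\beta)$ contributions as coercive terms rather than errors, your scheme closes with the constants of \eqref{4.6}; the first- and second-order steps and the concluding Sobolev argument are then as you describe.
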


\begin{remark} In our main results, we ask the $H^2-$norm of the initial perturbation $w_0(r)$ to be sufficiently small, it would be an interesting problem to deduce the corresponding nonlinear stability result for large initial perturbation and such a problem is under our current research. For related results on the nonlinear stability of stationary waves for certain hyperbolic conservation laws with dissipation and large initial perturbation, those interested are referred to \cite{Fan-Liu-Wang-Zhao-JDE-2014, Fan-Liu-Zhao-AA-2013, Fan-Liu-Zhao-JHDE-2011, Fan-Liu-Zhao-Zou-KRM-2013} and the references cited therein.
\end{remark}

\section{Proof of our main results}

This section is devoted to the proofs of our main results. To make the presentation clear, we divide this section into three subsections, the first one focuses on the proof of  Theorem \ref{4.1}.
\subsection{Proof of Theorem \ref{4.1}}

To prove Theorem \ref{4.1}, for some positive constants $T>0$ and $M>0$, we first define the set of functions for which we seek the solution of the initial-boundary value problem (\ref{3.3}) by
\begin{equation*}
  X_M(0,T)=\left\{w(t,r)\in C\left([0,T];H^2\right), w_r(t,r)\in L^2\left([0,T];H^2\right),\ \sup\limits_{t\in [0,T]}\|w(t)\|_{H^2}\leq M\right\}.
\end{equation*}
Put
\begin{equation*}
  N(T)=\sup_{0\leq t\leq T}\|w(t)\|_{H^2},
\end{equation*}
then we can get the following local existence result.
\begin{proposition}[Local Existence]\label{5.1}
  Under the assumptions listed in Theorem \ref{4.1}, for any positive constant $M,$ there exists a positive constant $T=T(M)$ such that if $\|w_0\|_{H^2}\leq M,$ the initial-boundary value problem (\ref{3.3}) has a unique solution $w(t,r)\in X_{2M}(0,T).$
\end{proposition}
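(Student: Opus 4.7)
The strategy is a standard contraction/iteration argument in the set $X_{2M}(0,T)$ for $T$ chosen sufficiently small depending on $M$. The plan is as follows: given $\tilde w\in X_{2M}(0,T)$, define $w=\mathcal T(\tilde w)$ as the unique solution of the \emph{linear} parabolic initial-boundary value problem
\[
w_t+\psi(r)w_r-\mu w_{rr}=-\tfrac12\tilde w_r^2,\qquad w_r(t,r_0)=0,\qquad w(0,r)=w_0(r),
\]
and then verify that (i) $\mathcal T$ maps $X_{2M}(0,T)$ into itself, and (ii) $\mathcal T$ is a contraction in a weaker topology on this set. Existence and regularity for the linear problem is classical (via Galerkin approximations, or the analytic semigroup generated by the Neumann realization of $-\mu\partial_r^2$), since $\psi$ is smooth and bounded by Lemma \ref{2.5}, the source $\tilde w_r^2$ lies in $L^\infty_tH^1_r$ by the one-dimensional Sobolev embedding $H^1\hookrightarrow L^\infty$, and the initial datum automatically satisfies the Neumann compatibility condition
\[
w_{0,r}(r_0)=v_0(r_0)-\phi(r_0)=v_--v_-=0.
\]

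The core of the argument is the a priori $H^2$ estimate. Standard energy manipulations---multiplying the equation by $w$ and by $-w_{rr}$, and recovering $\|w_{rr}\|$ and $\|w_{rrr}\|$ by using the equation to express them in terms of $w_t$, $w_r$ and $\tilde w_r^2$ after a time-differentiated estimate---should yield
\[
\tfrac{d}{dt}\|w(t)\|_{H^2}^2+\mu\|w_r(t)\|_{H^2}^2\leq C\!\left(\|w(t)\|_{H^2}^2+\|\tilde w(t)\|_{H^2}^4\right)
\]
once the boundary contributions at $r=r_0$ are handled. The Neumann condition $w_r(t,r_0)=0$ annihilates the contributions from $\int w_t(-w_{rr})$ and from $\int \psi w_r(-w_{rr})=-\tfrac12\int \psi(w_r^2)_r$; the only remaining boundary term is $-\tfrac12\psi(r_0)w(t,r_0)^2$ coming from $\int \psi w w_r$, and is absorbed via the one-dimensional trace inequality $|w(t,r_0)|^2\leq C\|w\|\,\|w_r\|$ followed by Young's inequality. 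Gronwall's lemma together with $\|\tilde w\|_{H^2}\leq 2M$ then produces
\[
\sup_{0\leq t\leq T}\|w(t)\|_{H^2}^2+\mu\int_0^T\|w_r(s)\|_{H^2}^2\,ds\leq e^{CT}\bigl(\|w_0\|_{H^2}^2+CT(2M)^4\bigr),
\]
and choosing $T=T(M)$ small enough forces the right-hand side below $(2M)^2$, giving (i).

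For (ii), let $W=\mathcal T(\tilde w_1)-\mathcal T(\tilde w_2)$ with $\tilde w_1,\tilde w_2\in X_{2M}(0,T)$. Then $W$ satisfies the same linear equation with zero initial and boundary data and source $-\tfrac12(\tilde w_{1,r}+\tilde w_{2,r})(\tilde w_{1,r}-\tilde w_{2,r})$. A direct $H^1$ energy estimate, using $\|\tilde w_{i,r}\|_{L^\infty}\leq C\|\tilde w_i\|_{H^2}\leq CM$, gives
\[
\sup_{0\leq t\leq T}\|W(t)\|_{H^1}^2\leq CTM^2\sup_{0\leq t\leq T}\|\tilde w_1-\tilde w_2\|_{H^2}^2,
\]
and a further shrinking of $T$ makes $\mathcal T$ a strict contraction in the $C([0,T];H^1)$ norm. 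Iterating $\mathcal T$ starting from $w^0\equiv w_0$ produces a Cauchy sequence in this weaker norm whose limit lies in $X_{2M}(0,T)$ by the usual weak-$\ast$ compactness argument in $L^\infty_tH^2$; this limit is the unique fixed point, hence the required local solution.

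The main (but mild) obstacle throughout is the boundary behavior at $r=r_0$: every integration by parts against $w_{rr}$ or $w_{rrr}$ produces a boundary contribution that must either be killed by the Neumann condition $w_r(t,r_0)=0$ (or its consequence $w_{rt}(t,r_0)=0$) or absorbed via the trace inequality. The weight function $\chi(r)$ of Lemma \ref{3.4.1} plays \emph{no} role at the local stage; it will only be needed later to extract a sign-definite lower bound from the term $I$ in \eqref{Bad term} during the global-in-time analysis that drives Theorem \ref{4.1}.
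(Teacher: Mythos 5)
Your proposal is correct and follows exactly the route the paper takes: the paper states that Proposition \ref{5.1} ``can be proved by a standard iterative method'' and omits all details, and your linearize--iterate--contract scheme (with the Neumann compatibility check $w_{0,r}(r_0)=v_--v_-=0$ and the trace inequality at $r=r_0$) is precisely that standard method, written out. The only point to tidy is the contraction display, where the output difference is measured in $H^1$ but the input difference in $H^2$; one should instead contract in the metric $\sup_t\|\cdot\|^2+\int_0^T\|\partial_r\cdot\|^2\,dt$ (for which the quadratic source difference is genuinely Lipschitz) and recover membership in $X_{2M}(0,T)$ from the uniform $H^2$ bound by weak-$\ast$ lower semicontinuity, exactly as you indicate.
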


Proposition \ref{5.1} can be proved by a standard iterative method, so we omit the details for brevity.

Our second result is concerned with the following a priori estimates on the local solution $w(t,r)$ constructed in Proposition \ref{5.1}, from which and the local solvability result Proposition \ref{5.1}, Theorem \ref{4.1} follows immediately.

\begin{proposition}[A Priori Estimates]\label{5.2}
  Under the assumptions listed in Theorem \ref{4.1}, suppose that $w(t,r)\in X_M(0,T)$ is a solution of (\ref{3.3}) defined on the time interval $[0,T].$ Then if one assumes further that there exists a sufficiently small positive constant $\epsilon_1>0$, which is independent of $M$ and $T$, such that if $N(T)\leq \epsilon_1,$ the following estimate holds
  \begin{equation}\label{5.2.0}
    \|w(t)\|_{H^2}^2+\int_0^t\left(\left\|\frac{w(\tau)}{r}\right\|^2+\|w_r(\tau)\|_{H^2}^2 +w^2(\tau,r_0)\right)d\tau\leq C\|w_0\|_{H^2}^2,\ \ t\in[0,T]
  \end{equation}
  for and $t\in[0,T]$ and some positive constant $C$ independent of $M$ and $T$.
\end{proposition}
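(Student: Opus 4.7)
The overall strategy is a weighted energy argument whose key ingredient is the zeroth-order estimate based on the multiplier $\chi(r)w$, with the weight $\chi$ constructed in Lemma~\ref{3.4.1} precisely so that the boundary and interior contributions come out with the right sign. After integrating the equation in \eqref{3.3} tested against $\chi w$ over $(r_0,\infty)$ and carrying out two integrations by parts (using $w_r(t,r_0)=0$ and decay of $w,w_r$ at infinity), the $\psi w w_r$ and $-\mu w w_{rr}$ pieces combine to produce the interior coefficient $-\tfrac{1}{2}(\chi\psi)_r-\tfrac{\mu}{2}\chi''$, which by the identity $\mu\chi''+(\psi\chi)_r=-\mu/r^2$ of Lemma~\ref{3.4.1} equals $\mu/(2r^2)$; the boundary contributions at $r_0$ collapse, via the identity $\chi'(r_0)+\psi(r_0)\chi(r_0)/\mu=-1/r_0$, into $\tfrac{\mu}{2r_0}w^2(t,r_0)$. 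Together with the dissipation $\mu\int\chi w_r^2\,dr$ and the bound $c_1\le\chi\le c_2$, this gives
\begin{equation*}
\|w(t)\|^2+\int_0^t\!\Bigl(\|w(\tau)/r\|^2+\|w_r(\tau)\|^2+w^2(\tau,r_0)\Bigr)d\tau\le C\|w_0\|^2+C\int_0^t\|w\|_{L^\infty}\|w_r\|^2\,d\tau,
\end{equation*}
and the cubic remainder is absorbed using $\|w\|_{L^\infty}\le C\|w\|^{1/2}\|w_r\|^{1/2}\le CN(T)\le C\epsilon_1$.

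Next I would carry out a first-order estimate by multiplying \eqref{3.3} by $-w_{rr}$; here $w_r(t,r_0)=0$ and the decay at infinity erase all boundary terms, yielding
\begin{equation*}
\tfrac{d}{dt}\tfrac12\|w_r\|^2+\mu\|w_{rr}\|^2+\tfrac12\!\int\psi_r w_r^2\,dr=\tfrac12\!\int w_r^2 w_{rr}\,dr,
\end{equation*}
where the $\psi_r$-term is controlled in time by the Step~1 bound on $\int_0^t\|w_r\|^2\,d\tau$ (using $|\psi_r|\le C$ from Lemma~\ref{2.5}) and the cubic right-hand side is absorbed by $CN(T)(\|w_r\|^2+\|w_{rr}\|^2)$. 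For the second-order level I would avoid the troublesome boundary term $\tfrac12\psi(r_0)w_{rr}^2(t,r_0)$ that would arise from an $-w_{rrr}$ multiplier by differentiating \eqref{3.3} in $t$: setting $u=w_t$, the new equation $u_t+\psi u_r-\mu u_{rr}=-w_r u_r$ inherits the compatible Neumann condition $u_r(t,r_0)=\partial_t w_r(t,r_0)=0$, so the weighted multiplier $\chi u$ replicates the Step~1 computation verbatim and delivers
\begin{equation*}
\|w_t(t)\|^2+\int_0^t\!\Bigl(\|w_t(\tau)/r\|^2+\|w_{rt}(\tau)\|^2+w_t^2(\tau,r_0)\Bigr)d\tau\le C\|w_t(0)\|^2,
\end{equation*}
after absorbing the cubic $\int\chi w_r u u_r\,dr$ by smallness. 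The initial datum $w_t(0)=\mu w_{0,rr}-\psi w_{0,r}-\tfrac12 w_{0,r}^2$ is majorized by $\|w_0\|_{H^2}$ via Sobolev embedding and the smallness of $\epsilon_1$.

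Finally, the spatial $H^2$ norm is recovered algebraically from the PDE: $\mu w_{rr}=w_t+\psi w_r+\tfrac12 w_r^2$ gives $\|w_{rr}\|\le C(\|w_t\|+\|w_r\|)$ pointwise in time, and differentiating once in $r$ yields $\mu w_{rrr}=w_{rt}+\psi_r w_r+\psi w_{rr}+w_r w_{rr}$, hence $\|w_{rrr}\|\le C(\|w_{rt}\|+\|w_r\|_{H^1})$, where the quadratic nonlinearities $\|w_r\|_{L^\infty}\|w_{rr}\|$ etc.\ are absorbed by smallness. Combining the four steps produces \eqref{5.2.0}. The principal obstacle I foresee is precisely the one that dictates the $\partial_t$-then-$\chi u$ detour at the second-derivative level: since $\psi(r_0)=V_-$ is of indefinite sign throughout the regime \eqref{1.3}, any direct spatial energy estimate for $w_{rr}$ spawns a boundary term with unknown sign, and it is only the time-independence of the boundary condition $w_r(t,r_0)=0$ — which transfers to $u_r(t,r_0)=0$ — that allows the weighted identity of Lemma~\ref{3.4.1} to be reused for $u=w_t$ and thereby close the estimate cleanly.
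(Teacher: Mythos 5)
Your zeroth-order step is exactly the paper's: the multiplier $\chi w$ with the weight of Lemma~\ref{3.4.1}, the interior coefficient collapsing to $\mu/(2r^2)$ and the boundary flux to $\tfrac{\mu}{2r_0}w^2(t,r_0)$, and absorption of the cubic term by $N(T)$-smallness; your first-order step (multiplier $-w_{rr}$) is equivalent to the paper's (differentiate in $r$, multiply by $w_r$) after one integration by parts. Where you genuinely diverge is the second-order level. The paper differentiates \eqref{3.3} twice in $r$, multiplies by $w_{rr}$, and confronts the boundary term $-\mu w_{rr}(t,r_0)w_{rrr}(t,r_0)$ head-on: evaluating the once-differentiated equation at $r=r_0$ (using $w_r(t,r_0)=0$, hence $w_{rt}(t,r_0)=0$) gives $w_{rrr}(t,r_0)=\tfrac{V_-}{\mu}w_{rr}(t,r_0)$, so the boundary term is $-V_-w_{rr}^2(t,r_0)$, of indefinite sign, which is then controlled by the trace inequality $|w_{rr}(t,r_0)|^2\le C\|w_{rr}\|\,\|w_{rrr}\|$ and absorbed into the $\mu\|w_{rrr}\|^2$ dissipation, with the leftover $C\|w_{rr}\|^2$ integrable in time by the first-order estimate. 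Your detour through $u=w_t$ avoids that boundary term entirely (since $u_r(t,r_0)=0$ lets you reuse the $\chi$-weighted identity verbatim) at the cost of having to recover $\|w_{rr}\|$ and $\|w_{rrr}\|$ algebraically from the equation; both routes close, and yours is arguably more robust to the sign of $V_-$, though it presumes enough regularity to differentiate the equation in $t$ (justifiable on smooth approximants). One point you gloss over: in the $u$-estimate the cubic term $\int\chi w_ruu_r\,dr$ is bounded by $CN(T)\bigl(\|u\|^2+\|u_r\|^2\bigr)$, and $\|u\|^2$ is \emph{not} part of the dissipation produced by that step (only $\|u/r\|^2$ and $\|u_r\|^2$ are); the estimate still closes because $\int_0^t\|w_t\|^2\,d\tau\le C\int_0^t\bigl(\|w_r\|^2+\|w_{rr}\|^2\bigr)d\tau$ is already controlled by your first two steps via the equation, but this needs to be said explicitly.
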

\begin{remark} In fact by choosing the weight function $\widetilde{\chi}(r)$ suitably, we can obtain better a priori estimates. For example, if we take $f(r)=\frac{1}{2\epsilon}\left(r^{-2\epsilon}-r_0^{-2\epsilon}\right)-1$ in $\widetilde{\chi}(r)$ for $\epsilon>0$ in (\ref{3.5}), then we can deduce the following new a priori estimates
  \begin{equation*}
    \|w(t)\|_{H^2}^2+\int_0^t\left(\left\|\frac{w(\tau)}{r^{\frac12+\epsilon}}\right\|^2 +\left\|w_r(\tau)\right\|_{H^2}^2+w^2(\tau,r_0)\right)d\tau\leq C\|w_0\|_{H^2}^2,\ \ t\in[0,T].
  \end{equation*}
  However, $\epsilon$ can not take $0,$ which is different from the a priori estimates obtained in \cite{Hashimoto-Matsumura-JDE-2019}.
\end{remark}
\begin{proof}
   Firstly we deduce the zeroth-order energy type estimates. To this end, we first notice that Lemma \ref{3.4.1} means that the norms $\|\cdot\|_\chi$ and $\|\cdot\|$ are equivalent. Then
   multiplying $(\ref{3.3})_1$ by $\chi w$, we can get that
   \begin{equation*}
     \left(\frac 12\chi w^2\right)_t-\frac\mu2\left(\chi_{rr}+\frac1\mu(\psi\chi)_r\right)w^2 +\mu\chi w_r^2+\mu\left(-\chi ww_r+\frac 12\left(\chi_r+\frac\psi\mu\chi\right)w^2\right)_r=-\frac 12\chi ww_r^2.
   \end{equation*}

Lemma \ref{3.4.1} together with the above identity yields
   \begin{equation}\label{5.2.1}
     \left(\frac 12\chi w^2\right)_t+\frac\mu2\frac{w^2}{r^2}+\mu\chi w_r^2 -\mu\left(\chi ww_r+\left(\frac1{r_0}-\frac1{2r}\right)w^2\right)_r=-\frac 12\chi ww_r^2.
   \end{equation}

Integrating the above identity with respect to $r$ over $(r_0,\infty),$ and noticing that
  \begin{equation*}
    -\frac 12\int_{r_0}^\infty\chi ww^2_rdr\leq C\|w\|_{L^\infty}\|w_r\|^2
    \leq CN(T)\|w_r\|^2,
  \end{equation*}
we obtain
  \begin{equation*}
    \frac 12\frac {d}{dt}\|w(t)\|_\chi^2 +\frac{\mu}2\left\|\frac{w(t)}{r}\right\|^2+\mu\|w_r(t)\|_\chi^2+\frac \mu{2r_0}w^2(t,r_0)\leq CN(T)\|w_r\|^2.
  \end{equation*}

Integrating the above equation with respect to $t$ over $[0,T],$ we then have the zeroth-order estimate as long as $N(T)$ is assumed to be small enough
  \begin{equation}\label{5.3}
    \|w(t)\|^2+\mu\int_0^t\left(\left\|\frac{w(\tau)}{r}\right\|^2+\|w_r(\tau)\|^2+\frac 1{r_0}w^2(\tau,r_0)\right)d\tau\leq C\|w_0\|^2.
  \end{equation}

Now we turn to deal with the first-order energy type estimates. For this purpose, differentiating $(\ref{3.3})_1$ with respect to $r$ once and multiplying the resulting identity by $w_r$ and integrating the resulting equation with respect to $r$ over $[r_0,\infty),$ we obtain
  \begin{equation}\label{5.4}
     \frac 12\frac{d}{dt}\|w_r(t)\|^2+\mu\left\|w_{rr}(t)\right\|^2\leq C \int_{r_0}^\infty \left(\left|\psi_rw_r^2\right|+\left|\psi w_rw_{rr}\right|+\left|w_r^2w_{rr}\right| \right)(t,r) dr.
  \end{equation}

From the properties of the stationary wave $\psi(r)$ obtained in Lemma \ref{2.5}, the right hand side of the above inequality can be estimates as follows:
  \begin{eqnarray*}
    \int_{r_0}^\infty\left|\psi_r(r)w^2_r(t,r)\right|dt&\leq&C\left\|w_r(t)\right\|^2,\\
    \int_{r_0}^\infty \left|\psi(r) w_r(t,r)w_{rr}(t,r)\right|dr&\leq& C \left\|w_r(t)\right\|^2+\frac\mu4\left\|w_{rr}(t)\right\|^2,\\
    \int_{r_0}^\infty\left|w_r^2(t,r)w_{rr}(t,r)\right|dr&\leq& \left\|w_r(t)\right\|_{L^\infty}\int_{r_0}^\infty\left|w_r(t,r)w_{rr}(t,r)\right|dr\\
    &\leq& CN(T)\left(\left\|w_r(t)\right\|^2+\left\|w_{rr}(t)\right\|^2\right).
  \end{eqnarray*}
Substituting the above estimates into \eqref{5.4} and by choosing $N(T)$ sufficiently small, one can get from the zeroth-order energy type estiamtes (\ref{5.3}) that
  \begin{equation}\label{5.5}
    \left\|w_r(t)\right\|^2+\mu\int_0^t\left\|w_{rr}(\tau)\right\|^2d\tau\leq C\left\|w_0\right\|_{H^1}^2.
  \end{equation}

Finally, we deduce the second-order energy type estimates. To do so, differentiating $(\ref{3.3})_1$ with respect to $r$ twice yields
  \begin{equation}\label{5.6}
    w_{rrt}-\mu w_{rrrr}=-w_{rr}^2-w_rw_{rrr}-\psi_{rr}w_r-2\psi_rw_{rr}-\psi w_{rrr}.
  \end{equation}
Multiplying (\ref{5.6}) by $w_{rr}$ and integrating the resulting equation with respect to $r$ over $[r_0,\infty),$ we get that
  \begin{eqnarray}\label{5.7}
    &&\frac 12\frac d{dt}\left\|w_{rr}(t)\right\|^2+\mu\left\|w_{rrr}(t)\right\|^2\\
    &\leq& -\mu w_{rr}(t,r_0)w_{rrr}(t,r_0)
    -\int_{r_0}^\infty\left( w_{rr}^3+w_rw_{rr}w_{rrr} +\psi_{rr}w_rw_{rr}+2\psi_rw^2_{rr}+\psi w_{rr}w_{rrr} \right)(t,r)dr.\nonumber
  \end{eqnarray}
 Now we turn to estimate the terms in the right hand side of \eqref{5.7}. For the first term, if we take $r=r_0$ in (\ref{5.6}), then we can get from the facts $w_r(t,r_0)=0, \psi(r_0)=V_-$ that
  \begin{equation*}
    w_{rrr}(t,r_0)=\frac{V_-}{\mu}w_{rr}(t,r_0).
  \end{equation*}
Consequently, from the above identity and the Sobolev inequality, the first term in the right hand side of \eqref{5.7} can be estimated as
  \begin{eqnarray}\label{5.9}
      \left|w_{rr}(t,r_0)w_{rrr}(t,r_0)\right|&\leq& C\left|w_{rr}(t,r_0)\right|^2\nonumber\\
       &\leq& C\left\|w_{rr}(t)\right\|\left\|w_{rrr}(t)\right\|\\
      &\leq& \frac{\mu}4\left\|w_{rrr}(t)\right\|^2+C\left\|w_{rr}(t)\right\|^2.\nonumber
  \end{eqnarray}

For the second term in the right hand side of (\ref{5.7}), the Sobolev inequality and the definition of $N(T)$ tell us that
  \begin{eqnarray}\label{5.10}
      -\int_{r_0}^\infty \left(w_{rr}^3+w_rw_{rr}w_{rrr}\right)(t,r)dr
      &=&\int_{r_0}^\infty w_r(t,r)w_{rr}(t,r)w_{rrr}(t,r)dr\nonumber\\
      &\leq& \left\|w_r(t)\right\|_{L^\infty}\int_{r_0}^\infty\left|w_{rr}(t,r)w_{rrr}(t,r)\right|dr\\
      &\leq& CN(T)\left(\left\|w_{rr}(t)\right\|^2+\left\|w_{rrr}(t)\right\|^2\right),\nonumber
  \end{eqnarray}
  \begin{equation}\label{5.11}
    -\int_{r_0}^\infty\psi_{rr}(r)w_r(t,r)w_{rr}(t,r)dr\leq C\left(\left\|w_r(t)\right\|^2+\left\|w_{rr}(t)\right\|^2\right),
  \end{equation}
  \begin{equation}\label{5.12}
    -2\int_{r_0}^\infty\psi_r(r)  w^2_{rr}(t,r)dr\leq C\left\|w_{rr}(t)\right\|^2,
  \end{equation}
  \begin{equation}\label{5.13}
     -\int_{r_0}^\infty \psi(r) w_{rr}(t,r)w_{rrr}(t,r)dr\leq \frac\mu4\left\|w_{rrr}(t)\right\|^2+C\left\|w_{rr}(t)\right\|^2.
  \end{equation}

Substituting the estimates (\ref{5.9})-(\ref{5.13}) into (\ref{5.7}), integrating the resulting inequality with respect to $t$ from $0$ to $t,$
and by making use of the zeroth-order energy type estimates (\ref{5.3}) and the first-order energy type estimates (\ref{5.5}), we have
  \begin{equation}\label{5.14}
  \left\|w_{rr}(t)\right\|^2+\mu\int_0^t\left\|w_{rrr}(\tau)\right\|^2d\tau\leq C\left\|w_0\right\|_{H^2}.
  \end{equation}

Having obtained the estimates (\ref{5.3}), (\ref{5.5}) and (\ref{5.14}), the a priori estimate (\ref{5.2.0}) follows immediately and this completes the proof of Proposition \ref{5.2}.
\end{proof}

\subsection{Proof of Theorem \ref{4.3}}

We prove Theorem \ref{4.3} in this subsection. To this end, the key point is to deduce the following result.
\begin{lemma}\label{5.15}
  Under the assumptions listed in Theorem \ref{4.3}, for any $0\leq\beta\leq \alpha$ and $\gamma\geq0,$ it holds that
  \begin{eqnarray}\label{5.16}
    &&(1+t)^\gamma|w(t)|_\beta^2+\int_0^t\left\{\beta(1+\tau)^\gamma|w(\tau)|^2_{\beta-1} +(1+\tau)^\gamma\left|w_r(\tau)\right|^2_\beta
    +(1+\tau)^\gamma w^2(\tau,r_0)\right\}d\tau\nonumber\\
    &\leq& C\left\{\left|w_0\right|_\beta^2 +\gamma\int_0^t(1+\tau)^{\gamma-1}|w(\tau)|_\beta^2d\tau+
    \beta\int_0^t(1+\tau)^\gamma\left\|w_r(\tau)\right\|^2d\tau\right\}.
  \end{eqnarray}
\end{lemma}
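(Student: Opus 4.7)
The plan is to multiply equation $(\ref{3.3})_1$ by the multiplier $(1+t)^\gamma\langle r\rangle^\beta\chi(r)w$ and integrate over $r\in(r_0,\infty)$. The $w_t$ contribution yields the time derivative $\tfrac{1}{2}\frac{d}{dt}\bigl[(1+t)^\gamma\int\langle r\rangle^\beta\chi w^2\,dr\bigr]$ together with the error $-\tfrac{\gamma}{2}(1+t)^{\gamma-1}\int\langle r\rangle^\beta\chi w^2\,dr$, which, after using the equivalence of $\int\langle r\rangle^\beta\chi w^2\,dr$ with $|w|_\beta^2$ from Lemma~\ref{3.4.1} (since $\chi$ is uniformly bounded above and below), reproduces the first error term on the right-hand side of \eqref{5.16}. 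Integration by parts on the diffusive term $-\mu w_{rr}$ produces the manifestly good coefficient $\mu\langle r\rangle^\beta\chi$ in front of $w_r^2$, yielding the target $\int_0^t(1+\tau)^\gamma|w_r(\tau)|_\beta^2\,d\tau$ on the left, with the boundary contribution from this step vanishing because $w_r(t,r_0)=0$.

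What remains after all the integrations by parts is a bulk coefficient of $w^2$ equal to $-\tfrac{\mu}{2}(\langle r\rangle^\beta\chi)_{rr}-\tfrac{1}{2}(\langle r\rangle^\beta\chi\psi)_r$. Splitting this into a $\beta$-independent part and a $\beta$-dependent part, the first simplifies by the key identity $\chi_{rr}+\mu^{-1}(\chi\psi)_r=-1/r^2$ from Lemma~\ref{3.4.1} to the uniformly positive $\tfrac{\mu\langle r\rangle^\beta}{2r^2}w^2$, reproducing the zeroth-order gain of Proposition~\ref{5.2}. The $\beta$-dependent remainder, upon substituting $\chi_r=\tfrac{1}{r}-\tfrac{2}{r_0}-\tfrac{\psi\chi}{\mu}$, reorganizes into the leading favorable piece $\tfrac{2\mu\beta r}{r_0}\langle r\rangle^{\beta-2}w^2$, which for large $r$ behaves like $\beta\langle r\rangle^{\beta-1}$ and therefore generates the desired $\beta(1+\tau)^\gamma|w|^2_{\beta-1}$ on the left, plus lower-order $\beta$-remainders. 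The boundary contribution at $r=r_0$, computed via the same ODE substitution, equals $-\tfrac{\mu}{2}\bigl[(\langle r\rangle^\beta\chi)_r+\tfrac{\psi}{\mu}\langle r\rangle^\beta\chi\bigr]_{r_0}w^2(t,r_0)=\bigl(\tfrac{\mu\langle r_0\rangle^\beta}{2r_0}-\tfrac{\mu\beta r_0\langle r_0\rangle^{\beta-2}\chi(r_0)}{2}\bigr)w^2(t,r_0)$, whose dominant piece produces the $(1+\tau)^\gamma w^2(\tau,r_0)$ term on the left.

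The $\beta$-dependent remainders — including the sign-indefinite $\psi\chi$-containing pieces, the lower-order polynomial corrections of the form $\mu\beta\langle r\rangle^{\beta-2}\chi w^2$, and any large-$\beta$ boundary mismatch — are controlled by Cauchy--Schwarz splits of the schematic form $C\beta|w|_{\beta-1}\|w_r\|\leq \eta\beta|w|_{\beta-1}^2+C_\eta\beta\|w_r\|^2$, together with the trace bound $w^2(t,r_0)\leq C\|w\|\|w_r\|$; the first piece in each split is absorbed into the good $\beta$-term on the left, and the second yields the final error $\beta\int_0^t(1+\tau)^\gamma\|w_r(\tau)\|^2\,d\tau$ on the right of \eqref{5.16}. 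The nonlinear term $-\tfrac{1}{2}(1+t)^\gamma\int\langle r\rangle^\beta\chi w w_r^2\,dr$ is bounded by $CN(T)(1+t)^\gamma|w_r|_\beta^2$ via the embedding $\|w\|_{L^\infty}\leq C\|w\|_{H^1}\leq CN(T)$ and the boundedness of $\chi$, hence absorbed into the good diffusive term once $N(T)$ is chosen small. Integrating in $t$ from $0$ to $t$ and applying the lower/upper bounds on $\chi$ then yields \eqref{5.16}.

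The main obstacle is the handling of the sign-indefinite residual terms that carry a factor of $\psi$: because the stationary wave $\phi$ (equivalently $\psi$) is non-monotonic throughout the regime \eqref{1.3} under consideration, no pointwise sign condition is available, so the naive energy identity fails. The decisive point is that the specific design \eqref{3.4} of the weight $\chi$, encoded in the two ODE identities of Lemma~\ref{3.4.1}, produces the precise cancellation that turns these terms into the favorable dominant piece $\tfrac{2\mu\beta r}{r_0}\langle r\rangle^{\beta-2}w^2$ at infinity and a positive boundary constant $\tfrac{\mu}{2r_0}$ at $r_0$, with the residue being absorbable into the unweighted $\beta\|w_r\|^2$ error on the right.
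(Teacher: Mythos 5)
Your overall strategy --- testing $(\ref{3.3})_1$ against $(1+t)^\gamma\langle r\rangle^\beta\chi w$ and exploiting the two ODE identities for $\chi$ from Lemma \ref{3.4.1} --- is essentially the paper's (which first derives the pointwise identity \eqref{5.2.1} with the multiplier $\chi w$ and only then multiplies by $\langle r\rangle^\beta$ and $(1+t)^\gamma$), and your treatment of the time weight, of the nonlinear term, and of the $\beta$-independent part of the bulk coefficient is correct. The gap is in the $\beta$-dependent remainders. By integrating \emph{every} cross term by parts you convert the quantity $\mu\beta\int_{r_0}^\infty r\langle r\rangle^{\beta-2}\chi\,ww_r\,dr$ into pure $w^2$ bulk terms, and these are neither ``lower-order'' nor of the product form $C\beta|w|_{\beta-1}\|w_r\|$ to which your Cauchy--Schwarz split could apply. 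Concretely, the resulting contribution $-\frac{\mu\beta}{2}\bigl(r\langle r\rangle^{\beta-2}\chi\bigr)_r w^2$ contains (i) the piece $\frac{\beta r}{2}\langle r\rangle^{\beta-2}\psi\chi\,w^2$, which is of exactly the same weight order $\beta\langle r\rangle^{\beta-1}w^2$ as your favorable piece $\frac{2\mu\beta r}{r_0}\langle r\rangle^{\beta-2}w^2$ but carries the sign-indefinite, non-small factor $\psi\chi$ (under \eqref{1.3} no pointwise bound such as $|\psi\chi|\leq 4\mu/r_0$ is available: $\psi$ is not monotone, may change sign, and only $\|\chi\|_{L^\infty}<\infty$ is known); and (ii) a piece proportional to $\beta(\beta-2)\frac{r^2}{\langle r\rangle^2}\langle r\rangle^{\beta-2}\chi\,w^2$, which for the large values of $\beta\leq\alpha$ allowed by the lemma behaves like $\beta^2\langle r\rangle^{\beta-2}w^2$ and overwhelms the good term of size $\beta\langle r\rangle^{\beta-1}w^2$ on the region $r\lesssim\beta$. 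Neither term contains a factor of $w_r$, so neither can be traded for the admissible error $\beta\int_0^t(1+\tau)^\gamma\|w_r(\tau)\|^2d\tau$ by the mechanism you describe; absorbing them would require an additional weighted Poincar\'e/trace argument on a compact region that you have not supplied.

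The paper avoids this entirely by \emph{not} integrating the cross term by parts: in \eqref{5.24} the only error is $C\beta\int_{r_0}^\infty\langle r\rangle^{\beta-1}|ww_r|\,dr$, which in \eqref{algebraic-decay} is split by Cauchy--Schwarz into $\frac{\mu\beta}{4\langle r_0\rangle}|w|^2_{\beta-1}$ (absorbed into the manifestly positive coefficient $A_\beta(r)\geq\frac{\beta}{2\langle r_0\rangle}$) plus $C\beta\int\langle r\rangle^{\beta-1}w_r^2$, and the latter is decomposed over $[r_0,R]$ and $[R,\infty)$ so that it is bounded by $\beta C(R)\|w_r\|^2+\frac{C\beta}{R}|w_r|^2_\beta$; choosing $R$ large absorbs the second piece into $\mu|w_r|^2_\beta$, while the first piece is precisely the error term on the right of \eqref{5.16}. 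You should redo your $\beta$-remainder step along these lines --- keep $ww_r$ as a cross term and use the near/far splitting in $r$ --- rather than pushing everything into $w^2$ form.
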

\begin{proof}
  Multiplying (\ref{5.2.1}) by $\langle r\rangle^\beta$ and integrating the resultant equation with respect to $r$ over $[r_0,\infty),$ we obtain
  \begin{eqnarray}\label{5.24}
      &&\left(\frac12\int_{r_0}^\infty\chi(r)\langle r\rangle^\beta w^2(t,r)dr\right)_t+\mu\int_{r_0}^\infty\left\{A_\beta(r)\langle r\rangle^{\beta-1}w^2(t,r)+\chi(r)\langle r\rangle^\beta w_r^2(t,r)\right\}dr+\frac\mu{2r_0}\langle r_0\rangle^\beta w^2(t,r_0)\nonumber\\
      &\leq& C\beta\int_{r_0}^\infty\langle r\rangle^{\beta-1}\left|w(t,r)w_r(t,r)\right|dr,
  \end{eqnarray}
  where $A_\beta(r)=\frac{\langle r\rangle}{2r^2}+\beta\frac{r}{\langle r\rangle}\left(\frac1{r_0}-\frac1{2r}\right)\geq \frac{\beta}{2\langle r_0\rangle}.$

The term in the right hand side of (\ref{5.24}) can controlled by
   \begin{eqnarray}\label{algebraic-decay}
       &&C\beta\int_{r_0}^\infty\langle r\rangle^{\beta-1}\left|w(t,r)w_r(t,r)\right|dr\nonumber\\
       &\leq&\frac{\mu\beta}{4\langle r_0\rangle} \int_{r_0}^\infty\langle r\rangle^{\beta-1}w^2(t,r)dr+\beta C\left(\int_{r_0}^R+\int_{R}^\infty\right)\langle r\rangle^{\beta-1}w_r^2(t,r)dr\\
       &\leq&\frac{\mu\beta}{4\langle r_0\rangle}\int_{r_0}^\infty\langle r\rangle^{\beta-1}w^2(t,r)dr +\beta C(R)\int_{r_0}^\infty w_r^2(t,r)dr+\frac{\beta C}R\int_{r_0}^\infty\langle r\rangle^\beta w_r^2(t,r)dr,\nonumber
   \end{eqnarray}
where $C(R)$ depends only on $R$.

By choosing $R$ suitable large in \eqref{algebraic-decay}, we can get from \eqref{5.24}, \eqref{algebraic-decay} and Lemma \ref{3.4.1} that
   \begin{equation}\label{add-1}
     \left(\int_{r_0}^\infty\chi(r)\langle r\rangle^\beta w^2(t,r)dr\right)_t+\int_{r_0}^\infty\left(\langle r\rangle^{\beta-1}w^2(t,r) +\langle r\rangle^\beta w_r^2(t,r)\right)dr+w^2(t,r_0) \leq C\beta\left\|w_r(t)\right\|^2.
   \end{equation}

Having obtained \eqref{add-1}, the estimate \eqref{5.16} follows immediately by multiplying \eqref{add-1} by $(1+t)^\gamma$, integrating the resultant equation with respect to $t$ from $0$ to $t$ and the properties on the weight function $\chi(r)$ obtained in Lemma \ref{3.4.1}. This completes the proof of Lemma \ref{5.15}.
\end{proof}

Having obtained Lemma \ref{5.15}, by repeating the arguments used in \cite{Kawashima-Matsumura-CMP-1985}, we can deduce the following two results which is crucial to yield the temporal convergence rate for $\|w(t)\|.$
\begin{lemma}\label{5.17}
  It holds that for $k=0,1,\cdots,[\alpha],$
  \begin{equation}\label{5.18}
    (1+t)^k|w(t)|^2_{\alpha-k}+\int_0^t\left\{(\alpha-k)(1+\tau)^k|w(\tau)|^2_{\alpha-k-1} +(1+\tau)^k\left|w_r(\tau)\right|^2_{\alpha-k}
    +(1+\tau)^kw^2(\tau,r_0)\right\}d\tau\leq C|w_0|^2_\alpha.
  \end{equation}
\end{lemma}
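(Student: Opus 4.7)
The plan is to establish \eqref{5.18} by induction on $k$, applying Lemma \ref{5.15} at each stage with a carefully chosen pair of parameters $(\beta,\gamma)$. This is the classical time-weighted iteration scheme of Kawashima--Matsumura: at each step we trade one unit of spatial weight $\langle r\rangle$ for one additional factor of time weight $(1+t)$, so that after $k$ iterations the norm $|w(t)|_{\alpha-k}^2$ carries a factor $(1+t)^k$.

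For the base case $k=0$ I apply Lemma \ref{5.15} with $\beta=\alpha$ and $\gamma=0$. The only term on the right-hand side of \eqref{5.16} that is not already absorbed into the left-hand side of \eqref{5.18} is $C\alpha\int_0^t\|w_r(\tau)\|^2\,d\tau$, which is immediately dominated by the $H^2$ a priori estimate \eqref{5.2.0} of Proposition \ref{5.2}; the constant $C$ in \eqref{5.18} is understood to absorb the resulting $\|w_0\|_{H^2}^2$.

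For the inductive step, suppose \eqref{5.18} holds at level $k$ with $k+1\le[\alpha]$, and apply Lemma \ref{5.15} with $\beta=\alpha-k-1$ and $\gamma=k+1$. The three terms on the right-hand side of \eqref{5.16} are then handled as follows: the first, $C|w_0|_{\alpha-k-1}^2$, is bounded by $C|w_0|_\alpha^2$ since $\alpha-k-1\le\alpha$; the second, $C(k+1)\int_0^t(1+\tau)^k|w(\tau)|_{\alpha-k-1}^2\,d\tau$, is directly controlled by the integral term in the inductive hypothesis after dividing out the strictly positive factor $\alpha-k$; and the third, $C(\alpha-k-1)\int_0^t(1+\tau)^{k+1}\|w_r(\tau)\|^2\,d\tau$, is the only delicate one.

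The main obstacle is this third term, since the spatial weight $\langle r\rangle^{\alpha-k}$ controlling $|w_r|^2_{\alpha-k}$ in the inductive hypothesis is too weak by a factor of $(1+\tau)$. The key trick is to apply Lemma \ref{5.15} a second time with $\beta=0$ and $\gamma=k+1$: the vanishing of the coefficient $\beta$ kills the problematic $\|w_r\|^2$ term on the right of \eqref{5.16} and delivers the clean bound
$$
\int_0^t(1+\tau)^{k+1}\|w_r(\tau)\|^2\,d\tau\le C\|w_0\|^2+C(k+1)\int_0^t(1+\tau)^k\|w(\tau)\|^2\,d\tau.
$$
Because $k+1\le[\alpha]$ ensures $\alpha-k-1\ge0$ and $\langle r\rangle\ge1$, we have $\|w\|^2\le|w|_{\alpha-k-1}^2$, so the remaining time integral is controlled by the integral term in the inductive hypothesis at level $k$, yielding a bound of the form $C|w_0|_\alpha^2$. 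Substituting this back into the first application of Lemma \ref{5.15} closes the induction and gives \eqref{5.18} at level $k+1$.
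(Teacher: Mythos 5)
Your proof is correct and is essentially the argument the paper itself invokes: the paper gives no explicit proof of Lemma \ref{5.17}, deferring to the induction scheme of Kawashima--Matsumura, and your iteration of Lemma \ref{5.15} with $(\beta,\gamma)=(\alpha-k,k)$ together with the auxiliary application at $\beta=0$ to control $\int_0^t(1+\tau)^{k+1}\|w_r\|^2d\tau$ is exactly that scheme. Your remark that the base case forces the constant to absorb $\|w_0\|_{H^2}^2$ via Proposition \ref{5.2} is a fair (and correctly flagged) reading of the paper's slightly imprecise right-hand side $C|w_0|_\alpha^2$, consistent with the final bound in Theorem \ref{4.3}.
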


\begin{lemma}\label{5.19}
  It holds for any $\epsilon>0$ that
  \begin{equation}\label{5.20}
    (1+t)^{\alpha+\epsilon}\|w(t)\|^2+\int_0^t(1+\tau)^{\alpha+\epsilon} \left\{\left\|w_r(\tau)\right\|^2+
    w^2(\tau,r_0)\right\}d\tau\leq C(1+t)^\epsilon|w_0|_\alpha^2.
  \end{equation}
\end{lemma}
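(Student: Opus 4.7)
The plan is to combine the space-weighted estimate of Lemma \ref{5.15} with the algebraic decay of Lemma \ref{5.17}, in the style of the space-time weighted energy method of Kawashima-Matsumura. First, I will apply Lemma \ref{5.15} with spatial weight exponent $\beta = 0$ and time weight exponent $\gamma = \alpha + \epsilon$:
\begin{align*}
(1+t)^{\alpha+\epsilon}\|w(t)\|^2 &+ \int_0^t (1+\tau)^{\alpha+\epsilon}\bigl(\|w_r(\tau)\|^2 + w^2(\tau,r_0)\bigr)d\tau \\
&\leq C\|w_0\|^2 + C(\alpha+\epsilon) \int_0^t (1+\tau)^{\alpha+\epsilon-1}\|w(\tau)\|^2 d\tau.
\end{align*}
Since $\|w_0\|^2 \leq |w_0|_\alpha^2$, the essential task is to control the Gronwall-type remainder on the right by $C(1+t)^\epsilon|w_0|_\alpha^2$.

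To do this, I will invoke Lemma \ref{5.17} at the endpoint $k = [\alpha]$: because $\alpha - [\alpha] \geq 0$ forces $\langle r\rangle^{\alpha - [\alpha]}\geq 1$, it delivers the pointwise decay
\[
\|w(\tau)\|^2 \leq |w(\tau)|_{\alpha-[\alpha]}^2 \leq C(1+\tau)^{-[\alpha]}|w_0|_\alpha^2,
\]
so that the remainder is bounded by $C|w_0|_\alpha^2\int_0^t(1+\tau)^{(\alpha-[\alpha])+\epsilon-1}d\tau$. When $\alpha\in \mathbb{Z}$, this is at most $C|w_0|_\alpha^2(1+t)^\epsilon/\epsilon$, and the claim follows immediately. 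For non-integer $\alpha$, setting $\sigma := \alpha - [\alpha] \in (0,1)$, the naive estimate leaves an extra $(1+t)^\sigma$ factor that must be removed; the plan is to perform an additional iteration of Lemma \ref{5.15} with the fractional spatial weight $\beta = \sigma$ and time weight $(1+t)^{[\alpha]+\epsilon}$, using the pointwise bound $(1+t)^{[\alpha]}|w|_\sigma^2 \leq C|w_0|_\alpha^2$ from Lemma \ref{5.17} to close the $C([\alpha]+\epsilon)\int(1+\tau)^{[\alpha]+\epsilon-1}|w|_\sigma^2 d\tau$ term, and the dissipation bound $\int_0^t(1+\tau)^{[\alpha]+\epsilon}\|w_r\|^2 d\tau \leq C(1+t)^\epsilon|w_0|_\alpha^2$ already obtained from the $\beta = 0$ run to absorb the self-referential $C\sigma\int_0^t(1+\tau)^{[\alpha]+\epsilon}\|w_r\|^2 d\tau$ remainder, followed by an interpolation in the spatial weight to promote the decay rate from $[\alpha]$ to $\alpha$.

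I expect the \textbf{main obstacle} to lie in this fractional refinement for non-integer $\alpha$: Lemma \ref{5.17} naturally produces only integer decay exponents, and closing the $(\alpha - [\alpha])$ gap requires that the self-referential $\|w_r\|^2$ remainder term, arising from a nonzero fractional spatial weight $\beta$ on the right-hand side of Lemma \ref{5.15}, be absorbed using the time-weighted dissipation on the left-hand side; this absorption is delicate because the same dissipation is what one ultimately wants to estimate. The integrated dissipation estimate appearing in the conclusion of Lemma \ref{5.19} then follows directly from the base inequality once the $\|w(t)\|^2$ pointwise bound is in hand.
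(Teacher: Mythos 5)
The paper offers no written proof of this lemma; it only asserts that Lemmas \ref{5.17} and \ref{5.19} follow ``by repeating the arguments used in'' Kawashima--Matsumura, and your reconstruction follows exactly that intended route: run Lemma \ref{5.15} with $\beta=0$, $\gamma=\alpha+\epsilon$ (so the $\beta\int(1+\tau)^\gamma\|w_r\|^2$ remainder vanishes) and control the Gronwall term $\int_0^t(1+\tau)^{\alpha+\epsilon-1}\|w(\tau)\|^2d\tau$ via Lemma \ref{5.17}. For integer $\alpha$ your argument is complete and correct.

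For non-integer $\alpha$ you correctly locate the obstacle, but the step you call ``interpolation in the spatial weight'' is the entire content of the fractional case and is left as a named placeholder, so let me record what it must be. With $\sigma=\alpha-[\alpha]\in(0,1)$, the pointwise identity $w^2=\left(\langle r\rangle^{\sigma}w^2\right)^{1-\sigma}\left(\langle r\rangle^{\sigma-1}w^2\right)^{\sigma}$ and H\"older in $r$ give $\|w\|^2\leq\left(|w|_{\sigma}^2\right)^{1-\sigma}\left(|w|_{\sigma-1}^2\right)^{\sigma}$. Inserting $|w(\tau)|_{\sigma}^2\leq C(1+\tau)^{-[\alpha]}|w_0|_\alpha^2$ and then applying H\"older in $\tau$ with exponents $1/(1-\sigma)$ and $1/\sigma$ against the dissipation $\int_0^t(1+\tau)^{[\alpha]}|w(\tau)|_{\sigma-1}^2d\tau\leq C\sigma^{-1}|w_0|_\alpha^2$ --- which Lemma \ref{5.17} at $k=[\alpha]$ supplies precisely because its coefficient $\alpha-k=\sigma$ is nonzero when $\alpha\notin\mathbb{Z}$ --- yields $\int_0^t(1+\tau)^{\alpha+\epsilon-1}\|w(\tau)\|^2d\tau\leq C(1+t)^{\epsilon}|w_0|_\alpha^2$, the surviving time exponent being $\tfrac{\sigma+\epsilon-1}{1-\sigma}>-1$. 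This closes the base inequality directly, and your auxiliary $\beta=\sigma$ run becomes unnecessary. Two further imprecisions in your sketch: the bound $\int_0^t(1+\tau)^{[\alpha]+\epsilon}\|w_r\|^2d\tau\leq C(1+t)^{\epsilon}|w_0|_\alpha^2$ is \emph{not} ``already obtained from the $\beta=0$ run'' as you set it up, since with $\gamma=\alpha+\epsilon$ that run only delivers $(1+t)^{\sigma+\epsilon}$ before the fractional gap is closed (a separate $\beta=0$, $\gamma=[\alpha]+\epsilon$ run does give it); and no pointwise-in-$t$ interpolation can ``promote the decay rate from $[\alpha]$ to $\alpha$'', because $|w|_{\sigma-1}^2$ is controlled only in time-integrated form, so the interpolation must be performed inside the time integral as above.
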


To deduce the corresponding temporal convergence rates on the high-order derivatives of the solution $w(t,r)$ with respect to $r$, we can get the following lemma.

\begin{lemma}\label{5.21}
  Let $l=1,2,$ then it holds for any $\epsilon>0$ and $t\geq 0$ that
  \begin{equation}\label{5.22}
    (1+t)^{\alpha+\epsilon}\left\|\partial_r^lw(t)\right\|^2 +\int_0^t(1+\tau)^{\alpha+\epsilon}\left\|\partial_r^lw_r(\tau)\right\|^2
    d\tau \leq C(1+t)^\epsilon\left(\left\|w_0\right\|_{H^2}^2+\left|w_0\right|^2_\alpha\right).
  \end{equation}
\end{lemma}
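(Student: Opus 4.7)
The plan is to run a time-weighted first- and second-order energy estimate on the equation for $w$, keyed off the Kawashima--Matsumura argument already used to prove Lemma \ref{5.19}, and then close the estimates by induction on $l$ using Lemma \ref{5.19} as the base case. Because the right-hand side of \eqref{5.22} carries only the extra factor $(1+t)^\epsilon$, the key point is that the loss coming from differentiating the time weight $(1+t)^{\alpha+\epsilon}$ can always be absorbed by an estimate with one fewer derivative and a slightly worse $t$-power, which is exactly what the lower-order bound in Lemma \ref{5.19} provides.

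First I would treat $l=1$. Differentiating $(\ref{3.3})_1$ in $r$, multiplying by $(1+t)^{\alpha+\epsilon}w_r$, integrating over $(r_0,\infty)$ and handling the nonlinear term and the $\psi$-terms exactly as in the derivation of \eqref{5.5} (using Lemma \ref{2.5} and the smallness of $N(T)$), one obtains
\begin{equation*}
\frac{d}{dt}\bigl((1+t)^{\alpha+\epsilon}\|w_r(t)\|^2\bigr)+\mu(1+t)^{\alpha+\epsilon}\|w_{rr}(t)\|^2\leq C(\alpha+\epsilon)(1+t)^{\alpha+\epsilon-1}\|w_r(t)\|^2+C(1+t)^{\alpha+\epsilon}\|w_r(t)\|^2.
\end{equation*}
The borderline term on the right is handled by Lemma \ref{5.19}: integrating in $t$, the first term yields $\int_0^t(1+\tau)^{\alpha+\epsilon-1}\|w_r\|^2d\tau\leq C(1+t)^{\epsilon-1}|w_0|_\alpha^2$, while the second term reduces, after another integration, to exactly the integral already controlled by \eqref{5.20}. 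Combining these and multiplying by a harmless constant gives \eqref{5.22} for $l=1$.

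Next I would handle $l=2$ by the same strategy applied to the twice-differentiated equation \eqref{5.6}, multiplied by $(1+t)^{\alpha+\epsilon}w_{rr}$. The boundary contribution $(1+t)^{\alpha+\epsilon}w_{rr}(t,r_0)w_{rrr}(t,r_0)$ is treated as in \eqref{5.9}: use the boundary identity $w_{rrr}(t,r_0)=(V_-/\mu)w_{rr}(t,r_0)$ that follows from evaluating \eqref{5.6} at $r=r_0$, then apply the Sobolev trace $|w_{rr}(t,r_0)|^2\le C\|w_{rr}\|\|w_{rrr}\|$, and absorb half of $\mu(1+t)^{\alpha+\epsilon}\|w_{rrr}\|^2$ from the good dissipative term. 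The interior terms are estimated as in \eqref{5.10}--\eqref{5.13}, using $N(T)\ll 1$ to absorb the cubic contributions into the dissipation. The only structural novelty is the extra term coming from $\partial_t(1+t)^{\alpha+\epsilon}$, which produces $(\alpha+\epsilon)(1+t)^{\alpha+\epsilon-1}\|w_{rr}\|^2$; this is controlled by the just-established $l=1$ estimate \eqref{5.22} (with $\epsilon$ replaced by $\epsilon-1$ where needed, which is still admissible since Lemma \ref{5.15} permits $\beta=\alpha$ and any $\gamma\ge 0$, or alternatively via Proposition \ref{5.2} combined with $(1+t)^{-1}$ decay).

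The main obstacle I expect is controlling the boundary and commutator terms at second order uniformly in time, since after multiplying by $(1+t)^{\alpha+\epsilon}$ one loses the simple $L^2_t$ integrability provided by Proposition \ref{5.2}. The saving grace is that every such term is either (i) dissipative and can be absorbed, (ii) a lower-order term that falls directly under the $l=1$ estimate just proved, or (iii) cubic in $w$ and hence governed by $N(T)$. Once these three types are systematically absorbed, integrating in $t$ and using the bound $\int_0^t(1+\tau)^{\alpha+\epsilon-1}|w_0|_\alpha^2\,d\tau\le C(1+t)^{\alpha+\epsilon}\cdot(1+t)^{-\alpha}|w_0|_\alpha^2\le C(1+t)^\epsilon|w_0|_\alpha^2$ closes the induction and yields \eqref{5.22} for $l=2$.
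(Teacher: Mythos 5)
Your proof is correct and is essentially the argument the paper intends (the paper states Lemma \ref{5.21} without proof, deferring to the time-weighted energy method of Kawashima--Matsumura): weighting the first- and second-order energy identities behind \eqref{5.5} and \eqref{5.14} by $(1+t)^{\alpha+\epsilon}$, absorbing the boundary, lower-order and cubic terms as in Proposition \ref{5.2}, and closing with Lemma \ref{5.19} and the $l=1$ case is exactly the standard route. The only blemishes are two intermediate bounds stated more sharply than you justify --- the claimed factor $(1+t)^{\epsilon-1}$ for the weight-differentiation term, and the extraction of $(1+t)^{-\alpha}$ from the $\tau$-integral in your last display --- but neither is needed, since $(1+\tau)^{\alpha+\epsilon-1}\le(1+\tau)^{\alpha+\epsilon}$ lets every term produced by differentiating the time weight be dominated by dissipation integrals already controlled by Lemma \ref{5.19} or by your $l=1$ estimate.
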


With the above results in hand, we now turn to prove Theorem \ref{4.3}. To do so, we first consider the convergence rate of $\|w(t)\|.$ If $\alpha$ is an integer, then we can take $k=\alpha$ in Lemma \ref{5.17} to deduce that
  \begin{equation}
    (1+t)^\alpha\|w(t)\|^2+\int_0^t(1+\tau)^\alpha\left\{\left\|w_r(\tau)\right\|^2 +w_0^2(\tau,r_0)\right\}d\tau\leq C|w_0|_\alpha^2,
  \end{equation}
  which means that
  \begin{equation}\label{5.23}
    \|w(t)\|\leq C (1+t)^{-\frac\alpha2}|w_0|_\alpha,
  \end{equation}
while if $\alpha$ is not an integer, we can use Lemma \ref{5.19} to yield (\ref{5.23}).

For the corresponding temporal convergence rates on $\|\partial_r^lw(t)\|$ for $l=1,2$, we can deduce from Lemma \ref{5.21} that
  \begin{equation}
    \|w(t)\|_{H^2}\leq C(1+t)^{-\frac\alpha2}\left(\|w_0\|_{H^2}+|w_0|_\alpha\right).
  \end{equation}
Consequently, the Sobolev inequality gives
  \begin{eqnarray}
      \sup\limits_{r\geq r_0}|v(t,r)-\phi(r)|&=&\sup\limits_{r\geq r_0}\left|w_r(t,r)\right|\nonumber\\
      &\leq& C\left\|w_r(t)\right\|^{\frac12}\left\|w_{rr}(t)\right\|^{\frac12}\\
      &\leq& C\left(\|w_0\|_{H^2}+|w_0|_\alpha\right)(1+t)^{-\frac\alpha 2}.\nonumber
  \end{eqnarray}
This is \eqref{4.4} and the proof of Theorem \ref{4.3} is completed.

\subsection{Proof of Theorem \ref{4.5}}

To prove Theorem \ref{4.5}, we first give the following result.

\begin{lemma}\label{5.3.0}
  Suppose that (\ref{4.6}) holds and let $B_\beta(r)=\frac{1}{2r^2}+\beta\left(\frac1{r_0}-\frac1{2r}\right),$ then it holds for $r\geq r_0$ that
  \begin{equation}\label{5.3.1}
    B_\beta(r)\geq\max\left\{\frac{3\beta}{4r_0}, \beta^2\chi(r)\right\}.
  \end{equation}
\end{lemma}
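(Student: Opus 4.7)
The plan is to locate the global minimum of $B_\beta(r)$ on $[r_0,\infty)$ and then verify that this minimum dominates both of the two quantities on the right-hand side under the two bounds imposed on $\beta$ in \eqref{4.6}. Compute
\begin{equation*}
B_\beta'(r) = -\frac{1}{r^3} + \frac{\beta}{2r^2} = \frac{1}{r^2}\left(\frac{\beta}{2}-\frac{1}{r}\right),
\end{equation*}
so $B_\beta$ has a unique critical point at $r^\ast = 2/\beta$, and it is a minimum. Since the hypothesis $\beta \le 2/r_0$ (the first entry of the minimum in \eqref{4.6}) gives $r^\ast \ge r_0$, the critical point lies in our domain and a direct substitution yields
\begin{equation*}
\min_{r\ge r_0} B_\beta(r) = B_\beta(2/\beta) = \frac{\beta^2}{8} + \frac{\beta}{r_0} - \frac{\beta^2}{4} = \frac{\beta}{r_0} - \frac{\beta^2}{8}.
\end{equation*}

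Having reduced the problem to a lower bound on this single quantity, I would handle the two terms in the max separately. For the first one, using $\beta \le 2/r_0$ again gives $\beta^2/8 \le \beta/(4r_0)$, so
\begin{equation*}
\frac{\beta}{r_0} - \frac{\beta^2}{8} \ge \frac{\beta}{r_0} - \frac{\beta}{4r_0} = \frac{3\beta}{4r_0},
\end{equation*}
which yields $B_\beta(r) \ge 3\beta/(4r_0)$ for all $r \ge r_0$. For the second, bound $\chi(r) \le \|\chi\|_{L^\infty}$ (using Lemma \ref{3.4.1}) so it is enough to show $B_\beta(r) \ge \beta^2\|\chi\|_{L^\infty}$; this reduces to
\begin{equation*}
\frac{\beta}{r_0} - \frac{\beta^2}{8} \ge \beta^2 \|\chi\|_{L^\infty}
\quad\Longleftrightarrow\quad
\frac{1}{r_0} \ge \beta\left(\|\chi\|_{L^\infty} + \frac{1}{8}\right)
\quad\Longleftrightarrow\quad
\beta \le \frac{8}{(8\|\chi\|_{L^\infty}+1)r_0},
\end{equation*}
which is precisely the second entry of the minimum in \eqref{4.6}. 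Combining the two bounds gives \eqref{5.3.1}.

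No serious obstacle is expected: the proof is essentially a one-variable calculus exercise, and the pleasant surprise is that the two pieces of the hypothesis in \eqref{4.6} match exactly the two inequalities we need, one for each of the terms in the max on the right-hand side of \eqref{5.3.1}. The only mild check is to make sure the critical point $r^\ast=2/\beta$ actually sits in $[r_0,\infty)$, which is guaranteed by $\beta \le 2/r_0$; otherwise the minimum would be attained at the boundary $r=r_0$ and the clean identity above would have to be replaced by $B_\beta(r_0) = 1/(2r_0^2)+\beta/(2r_0)$, which would not line up as cleanly with the stated condition on $\beta$.
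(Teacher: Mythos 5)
Your proof is correct and follows essentially the same route as the paper: locate the minimum of $B_\beta$ at $r^\ast=2/\beta$ (which lies in $[r_0,\infty)$ because $\beta\le 2/r_0$), compute $B_\beta(2/\beta)=\beta/r_0-\beta^2/8$, and check that each of the two constraints in \eqref{4.6} delivers exactly one of the two inequalities in the max. No gaps.
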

\begin{proof} Since
$$
\frac{dB_\beta(r)}{dr} =\frac{\beta r-2}{2r^3},
$$
if $\beta\leq \frac{2}{r_0},$ one can deduce that
$$
\min\limits_{r\geq r_0}B_\beta(r)=B_\beta\left(\frac{2}{\beta}\right),
$$
and consequently for $r\geq r_0$
\begin{equation}\label{add-exponential-1}
B_\beta(r)\geq B_\beta\left(\frac{2}{\beta}\right)=\frac{\beta(8-\beta r_0)}{8r_0}\geq \frac{3\beta}{4r_0}.
\end{equation}
While if $\beta\leq \frac{8}{(8\|\chi\|_{L^\infty}+1)r_0},$ one can deduce that \begin{equation}\label{add-exponential-2}
B_\beta(r)-\beta^2\chi(r)\geq\frac{\beta^2}{r_0}\left(\frac1\beta-\frac{\left(8\|\chi\|_{L^\infty}+1\right) r_0}{8}\right)\geq 0.
\end{equation}

\eqref{add-exponential-1} together with \eqref{add-exponential-2} imply \eqref{5.3.1}, this completes the proof of Lemma \ref{5.3.0}.
\end{proof}

For the exponential temporal convergence rates on $w(t,r)$, we have

\begin{lemma}\label{5.3.2}
  Under the assumptions listed in Theorem \ref{4.5}, it holds for all $t\geq 0$ that
  \begin{equation}\label{5.3.3}
    e^{\gamma t}|w(t)|_{\beta,\exp}^2 +\int_0^te^{\gamma\tau}\left\{\beta|w(\tau)|^2_{\beta,\exp} +\left|w_r(\tau)\right|^2_{\beta,\exp}+w^2(\tau,r_0) \right\}d\tau\leq C|w_0|^2_{\beta,\exp}.
  \end{equation}
\end{lemma}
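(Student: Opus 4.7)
The plan is to parallel the derivation of Lemma \ref{5.15}, replacing the algebraic spatial weight $\langle r\rangle^{\beta}$ by the exponential weight $e^{\beta r}$ and the polynomial temporal weight $(1+t)^{\gamma}$ by $e^{\gamma t}$. Concretely, I would start from the pointwise identity \eqref{5.2.1}, multiply it by $e^{\beta r}$, and integrate over $(r_0,\infty)$. Integration by parts of the divergence term $\bigl(\chi w w_r+(\tfrac{1}{r_0}-\tfrac{1}{2r})w^{2}\bigr)_r$ produces the boundary contribution $\tfrac{\mu}{2r_0}e^{\beta r_0}w^{2}(t,r_0)$ (using $w_r(t,r_0)=0$) together with the interior term $\mu\beta\int_{r_0}^{\infty}e^{\beta r}\bigl(\chi w w_r+(\tfrac{1}{r_0}-\tfrac{1}{2r})w^{2}\bigr)dr$. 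The quadratic-in-$w$ piece $\mu\beta(\tfrac{1}{r_0}-\tfrac{1}{2r})e^{\beta r}w^{2}$ combines with the existing $\tfrac{\mu}{2r^{2}}e^{\beta r}w^{2}$ dissipation to produce precisely the coefficient $\mu B_{\beta}(r)e^{\beta r}$ in front of $w^{2}$, with $B_\beta$ as defined in Lemma \ref{5.3.0}.

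The cross term $\mu\beta\int e^{\beta r}\chi w w_r\,dr$ is split by Young's inequality as $\tfrac{\mu\beta^{2}\chi}{2}w^{2}+\tfrac{\mu\chi}{2}w_r^{2}$: the second summand is absorbed into half of $\mu\chi e^{\beta r}w_r^{2}$, while the first must be dominated by $\mu B_{\beta}(r)e^{\beta r}w^{2}$. This is precisely where Lemma \ref{5.3.0} enters, since the bound $B_\beta\geq\beta^{2}\chi$ yields $B_\beta-\tfrac{1}{2}\beta^{2}\chi\geq\tfrac{1}{2}B_\beta$, and the companion bound $B_\beta\geq\tfrac{3\beta}{4r_{0}}$ then supplies a genuinely $\beta$-order spectral gap. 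The cubic remainder $-\tfrac{1}{2}\int\chi e^{\beta r}w w_r^{2}\,dr$ is bounded by $CN(T)\int\chi e^{\beta r}w_r^{2}\,dr$ and absorbed using the smallness $N(T)\leq\epsilon_{1}$ guaranteed by Theorem \ref{4.1}, exactly as in Proposition \ref{5.2}. Together these steps yield a differential inequality of the shape $\tfrac{d}{dt}\int_{r_0}^{\infty}\chi e^{\beta r}w^{2}\,dr+c\beta|w(t)|^{2}_{\beta,\exp}+c\int_{r_0}^{\infty}\chi e^{\beta r}w_r^{2}\,dr+c\,w^{2}(t,r_{0})\leq 0$ for some positive constant $c$ depending only on $\mu$ and $r_0$. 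Multiplying this inequality by $e^{\gamma t}$, I would use Lemma \ref{3.4.1} to bound the drift by $\gamma\|\chi\|_{L^{\infty}}|w|^{2}_{\beta,\exp}$; the constraint $\gamma\leq\tfrac{3\mu\beta}{8r_{0}\|\chi\|_{L^{\infty}}}$ in \eqref{4.6} is tailored precisely so that this drift is strictly dominated by the $\beta|w|^{2}_{\beta,\exp}$ dissipation, leaving a strictly positive residue. Integrating in $\tau$ from $0$ to $t$ and invoking the equivalence between $\int\chi e^{\beta r}w^{2}\,dr$ and $|w|^{2}_{\beta,\exp}$ from Lemma \ref{3.4.1} then yields \eqref{5.3.3}.

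The main obstacle I anticipate is the delicate balance inside the cross term: its Young residue $\tfrac{\mu\beta^{2}\chi}{2}w^{2}$ would be fatal if one only had the linear-in-$\beta$ lower bound $B_\beta\geq\tfrac{3\beta}{4r_{0}}$, since no smallness of $\beta$ alone makes $\beta^{2}\chi\leq\tfrac{3\beta}{4r_{0}}$ uniformly in $r$ without exploiting the structure of $\chi$. The two-sided lower bound in Lemma \ref{5.3.0} is the nonobvious ingredient that closes the loop: the quadratic-in-$\beta$ estimate $B_\beta\geq\beta^{2}\chi$ controls the cross-term residue, while the linear-in-$\beta$ estimate $B_\beta\geq\tfrac{3\beta}{4r_{0}}$ provides the spectral gap needed to absorb the exponential temporal drift under \eqref{4.6}. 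The two restrictions on $\beta$ in \eqref{4.6} correspond exactly to these two regimes and are both needed simultaneously.
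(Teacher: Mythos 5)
Your proposal is correct and follows essentially the same route as the paper's proof: multiply \eqref{5.2.1} by $e^{\beta r}$, integrate by parts to produce the $B_\beta$ coefficient and the boundary term, control the cross term $-\mu\beta\int e^{\beta r}\chi w w_r\,dr$ by Young's inequality together with the bound $B_\beta\geq\beta^2\chi$ from Lemma \ref{5.3.0}, absorb the cubic term via the smallness of $N(T)$, and then use $B_\beta\geq\tfrac{3\beta}{4r_0}$ with the constraint on $\gamma$ in \eqref{4.6} to dominate the temporal drift after multiplying by $e^{\gamma t}$. Your closing remark on why both lower bounds in Lemma \ref{5.3.0} are simultaneously needed accurately reflects how the paper uses them.
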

\begin{proof}
  Multiplying (\ref{5.2.1}) by $e^{\beta r}$ and then integrating the result with respect to $r$ from $r_0$ to $\infty$ yield
  \begin{eqnarray}\label{add-exponential-3}
      &&\left(\frac12\int_{r_0}^\infty e^{\beta r}\chi(r) w^2(t,r)dr\right)_t+\mu \int_{r_0}^\infty e^{\beta r}\left\{B_\beta(r)w^2(t,r)+\chi(r) w_r^2(t,r)\right\}dr+\frac{\mu}{2r_0} e^{\beta r_0}w^2(t,r_0)\nonumber\\
      &=&\underbrace{-\frac 12\int_{r_0}^\infty e^{\beta r}\chi(r) w(t,r)w_r^2(t,r)dr}_{I_1}\underbrace{-\mu\beta\int_{r_0}^\infty e^{\beta r}\chi(r) w(t,r)w_r(t,r)dr}_{I_2}.
  \end{eqnarray}
Now we turn to estimate $I_j (j=1,2)$ term by term. Firstly, $I_1$ is estimated as
  \begin{equation*}
      \left|I_1\right|\leq C \|w(t)\|_{L^\infty}\left|w_r(t)\right|^2_{\beta,\exp}
      \leq CN(T)\left|w_r(t)\right|^2_{\beta,\exp},
  \end{equation*}
while for $I_2$, we can get that
  \begin{eqnarray*}
      \left|I_2\right|&\leq&\frac{\mu}2\int_{r_0}^\infty e^{\beta r}\cdot 2\cdot \beta\chi(r)^{\frac12}|w(t,r)|\cdot\chi(r)^{\frac12}\left|w_r(t,r)\right|dr\\
      &\leq&\frac{\mu}2\int_{r_0}^\infty e^{\beta r}\left\{\beta^2\chi(r)|w(t,r)|^2 +\chi(r) w_r(t,r)^2\right\}dr\\
      &\leq&\frac{\mu}2\int_{r_0}^\infty e^{\beta r}\left\{B_\beta(r)w^2(t,r)+\chi(r) w_r(t,r)^2\right\}dr,
  \end{eqnarray*}
  where we have used Lemma \ref{5.3.0}.

Substituting the above estimates into \eqref{add-exponential-3} and by choosing $N(T)$ sufficiently small such that $N(T)\leq \frac{\mu}{4C}$, we can derive that
  \begin{equation}\label{5.3.4}
    \left(\int_{r_0}^\infty e^{\beta r}\chi(r) w^2(t,r)dr\right)_t +\frac{3\beta\mu}{4r_0}|w|^2_{\beta,\exp}+\frac{\mu}2\int_{r_0}^\infty e^{\beta r}\chi(r) w_r(t,r)^2dr+\frac{\mu e^{\beta r_0}}{r_0}w^2(t,r_0)\leq 0.
  \end{equation}

Multiplying (\ref{5.3.4}) by $e^{\gamma t}$ and integrating the result with respect to $t$ from $0$ to $t,$ we obtain
  \begin{eqnarray}\label{5.3.41}
      &&e^{\gamma t}\int_{r_0}^\infty e^{\beta r}\chi(r) w^2(t,r)dr+\int_0^te^{\gamma \tau}\left\{\frac{3\beta\mu}{4r_0}|w(\tau)|^2_{\beta,\exp}+\frac\mu2\int_{r_0}^\infty e^{\beta r}\chi(r) w_r(\tau,r)^2dr+\frac{\mu e^{\beta r_0}}{r_0}w^2(t,r_0)\right\}d\tau\nonumber\\
      &\leq& C|w_0|^2_{\beta,\exp}+
      \gamma\int_0^t e^{\gamma \tau}\int_{r_0}^\infty e^{\beta r}\chi(r) w^2(t,r)drd\tau.
  \end{eqnarray}

From the assumption (\ref{4.6}), the last term in the right hand side of (\ref{5.3.41}) can be bounded by
  \begin{equation}\label{add-exponential-5}
      \gamma\int_0^t e^{\gamma \tau}\int_{r_0}^\infty e^{\beta r}\chi(r) w^2(t,r)drd\tau\leq\gamma\|\chi\|_{L^\infty}
      \int_0^t e^{\gamma\tau}|w(\tau)|^2_{\beta,\exp}d\tau
      \leq \frac{3\beta\mu}{8r_0}\int_0^t e^{\gamma\tau}|w(\tau)|^2_{\beta,\exp}d\tau.
  \end{equation}

Putting \eqref{5.3.41} and \eqref{add-exponential-5} together, one can deduce (\ref{5.3.3}) immediately. This completes the proof of Lemma \ref{5.3.2}.
\end{proof}

Similarly, for the corresponding estimates on $\partial^l_rw(t,r)$ for $l=1,2$, we can get that

\begin{lemma}\label{5.3.5}
  Under the assumptions listed in Theorem \ref{4.5}, it holds for $t\geq 0$ that
  \begin{equation}\label{5.3.6}
    e^{\gamma t}\left|w_r(t)\right|_{\beta,\exp}^2 +\int_0^te^{\gamma\tau}\left|w_{rr}(\tau)\right|^2_{\beta,\exp}d\tau\leq C\|w_0\|^2_{H^{1,\beta}_{\exp}}
  \end{equation}
  and
  \begin{equation}\label{5.3.7}
    e^{\gamma t}\left|w_{rr}(t)\right|_{\beta,\exp}^2 +\int_0^te^{\gamma\tau}\left|w_{rrr}(\tau)\right|^2_{\beta,\exp}d\tau\leq C|w_0|^2_{H^{2,\beta}_{\exp}}.
  \end{equation}
\end{lemma}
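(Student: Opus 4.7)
The plan is to parallel the $H^2$ a priori estimate in Proposition \ref{5.2} and the algebraic-decay version in Lemma \ref{5.21}, now using the exponential weights $e^{\beta r}$ in space and $e^{\gamma t}$ in time, with Lemma \ref{5.3.2} serving as the zeroth-order input that closes the hierarchy.

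For \eqref{5.3.6}, I would differentiate $(\ref{3.3})_1$ once in $r$ to get
\[
w_{rt}+\psi_r w_r+\psi w_{rr}-\mu w_{rrr}=-w_r w_{rr},
\]
multiply by $e^{\beta r}w_r$, and integrate over $[r_0,\infty)$. The viscous integration by parts produces the dissipation $\mu|w_{rr}(t)|^2_{\beta,\exp}$ together with a cross term $\mu\beta\int e^{\beta r}w_r w_{rr}\,dr$; the boundary contribution at $r_0$ vanishes because $w_r(t,r_0)=0$. A further integration by parts (whose boundary term again vanishes for the same reason) reduces this cross term to $-\tfrac{\mu\beta^2}{2}|w_r(t)|^2_{\beta,\exp}$, which is absorbable under the smallness assumption \eqref{4.6} on $\beta$. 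The $\psi$-terms are bounded by $C|w_r(t)|^2_{\beta,\exp}$ via $|\psi|+|\psi_r|\le C$ from Lemma \ref{2.5}, and the quadratic nonlinearity $-\int e^{\beta r}w_r^2 w_{rr}\,dr$ is controlled by $CN(T)\bigl(|w_r(t)|^2_{\beta,\exp}+|w_{rr}(t)|^2_{\beta,\exp}\bigr)$ and absorbed by the smallness of $N(T)$. After multiplying the resulting inequality by $e^{\gamma t}$, integrating in $\tau$ from $0$ to $t$, and dominating the leftover $\int_0^t e^{\gamma\tau}|w_r(\tau)|^2_{\beta,\exp}\,d\tau$ on the right-hand side by \eqref{5.3.3}, we obtain \eqref{5.3.6}.

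For \eqref{5.3.7}, I would test \eqref{5.6} against $e^{\beta r}w_{rr}$, producing interior integrals analogous to those handled in \eqref{5.10}--\eqref{5.13}. The new feature is the boundary contribution $\mu e^{\beta r_0}w_{rr}(t,r_0)w_{rrr}(t,r_0)$ generated by the viscous integration by parts. Evaluating the once-differentiated equation at $r=r_0$ with $w_r(t,r_0)=0$ and $\psi(r_0)=V_-$ yields the identity $w_{rrr}(t,r_0)=\tfrac{V_-}{\mu}w_{rr}(t,r_0)$ (the same one used in \eqref{5.9}), so this boundary term equals $V_- e^{\beta r_0}|w_{rr}(t,r_0)|^2$; the one-dimensional Sobolev trace inequality $|w_{rr}(t,r_0)|^2\le C\|w_{rr}(t)\|\,\|w_{rrr}(t)\|$ lets it be absorbed into $\tfrac{\mu}{2}|w_{rrr}(t)|^2_{\beta,\exp}$ at the price of a harmless multiple of $|w_{rr}(t)|^2_{\beta,\exp}$. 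Multiplying by $e^{\gamma t}$, integrating in time, and invoking \eqref{5.3.6} to bound $\int_0^t e^{\gamma\tau}|w_{rr}(\tau)|^2_{\beta,\exp}\,d\tau$ on the right then yields \eqref{5.3.7}.

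The main obstacle I expect is the bookkeeping of the $\beta$-cross terms that appear every time one integrates by parts against $e^{\beta r}$, since each derivative of the weight spawns a term proportional to $\beta$ or $\beta^2$. The smallness of $\beta$ imposed by \eqref{4.6} (via the comparison $\beta^2\chi\le B_\beta$ proved in Lemma \ref{5.3.0}) is precisely what guarantees that every such term is either absorbed by the dissipative $\mu|\partial_r^{l+1}w|^2_{\beta,\exp}$ quantity produced at the viscous step, or routed back into the already controlled lower-order quantity from Lemma \ref{5.3.2}; the upper bound on $\gamma$ in \eqref{4.6} then handles the $e^{\gamma t}$ multiplier in the same way as in the proof of Lemma \ref{5.3.2}.
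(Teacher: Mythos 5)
Your proposal is correct and follows exactly the route the paper intends: the paper omits the proof of Lemma \ref{5.3.5} with the word ``similarly,'' meaning precisely the exponentially weighted analogue of the first- and second-order estimates \eqref{5.4}--\eqref{5.14} combined with the $e^{\gamma t}$ time-weighting and absorption scheme of Lemma \ref{5.3.2}, which is what you carry out. Your treatment of the boundary term via $w_{rrr}(t,r_0)=\tfrac{V_-}{\mu}w_{rr}(t,r_0)$ and of the $\beta$-cross terms (routed back to \eqref{5.3.3} after time integration) matches the paper's handling of \eqref{5.9} and \eqref{add-exponential-5}.
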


Having obtained Lemma \ref{5.3.2} and Lemma \ref{5.3.5}, Theorem \ref{4.5} follows easily and we omit the details for brevity.

\section*{Acknowledgements} The work was supported by the Fundamental Research Funds for the Central Universities and two grants from the National Natural Science Foundation of China under contracts 11731008 and 11671309, respectively.

\end{document}